\theoremstyle{definition}
\theoremstyle{remark}
\newtheorem*{remark}{Remark}
\newtheorem{theorem}{Theorem}
\newtheorem{corollary}[theorem]{Corollary}
\newtheorem{lemma}[theorem]{Lemma}
\newtheorem{proposition}[theorem]{Proposition}
\newtheorem*{main}{Theorem}
\newtheorem*{mainc}{Corollary}
\begin{document}
\title{Measuring complexity of curves on surfaces}

\author{Max Neumann-Coto}
\address{Instituto de Matemáticas, Universidad Nacional Autónoma de México, Ciudad Universitaria, 04510 Mexico}
\email{max.neumann@im.unam.mx}

\author{Macarena Covadonga Robles Arenas}
\address{Facultad de Ciencias, Universidad Nacional Autónoma de México, Ciudad
Universitaria, 04510 Mexico}
\email{msigma@ciencias.unam.mx}

\subjclass[2000]{Primary 57M05, 57M10; Secondary 20F05}

\date{July 18, 2018}

\maketitle

\begin{abstract}
We consider the relations between different measures of complexity for free homotopy classes of curves on a surface $\Sigma$, including the minimum number of self-intersections, the minimum length of the words representing them in a geometric presentation of $\pi_1(\Sigma)$, and the minimum degree of the coverings of $\Sigma$ to which they lift as embeddings.
\end{abstract}


\section*{Introduction}

In this paper we explore the relations between some `measures of complexity' for the free homotopy classes of immersed curves on a surface $\Sigma$. Among the many possible ways to measure complexity, we consider

\begin{itemize}
    \item The minimum number of self-intersections among the curves in the class $\alpha$, denoted by $i(\alpha)$. 
    \item The minimum length of a word representing $\alpha$ among all the `standard' presentations of $\pi_1(\Sigma)$, denoted by $l(\alpha)$.
    \item The minimum number of times that a  the class $\alpha$ crosses a fundamental region for $\Sigma$, for all the possible fundamental regions of $\Sigma$, denoted by $n(\alpha)$.
    \item The minimum number of crossings between $\alpha$ and all the essential curves in $\Sigma$, denoted by $m(\alpha)$.
\end{itemize}

If follows from a celebrated theorem of Scott \cite{S} that each immersed curve with minimal self-intersections on a surface $\Sigma$ lifts  as an embedded curve to some covering of $\Sigma$ of finite degree. Thus, it is natural to define

\begin{itemize}
    \item The minimum degree of a covering of $\Sigma$ to which $\alpha$ lifts as a closed embedded curve, denoted by $d(\alpha)$.
\end{itemize}

Another theorem, due to Hempel \cite{H}, and also a consequence of Scott's more general result, states that all surface groups are residually finite, i.e., that for each element of $\pi_1(\Sigma)$ there is a  a finite index subgroup that does not contain it, which is equivalent to saying that for each homotopy class of curves $\alpha$ in $\Sigma$ there is a finite covering to which $\alpha$ does not lift. Similarly to the previous measure, we define

\begin{itemize}
    \item The minimum degree of a covering of $\Sigma$ to which $\alpha$ does not lift, denoted by $r(\alpha)$. 
\end{itemize}

In \cite{P}, Patel obtained a bound for $d(\alpha)$ in terms of the length of some hyperbolic geodesic in the class 
$\alpha$ and in \cite{AGPS} that result was used to show that $d(\alpha) \leq c \cdot i(\alpha)$, for some $c$ that depends on the topology of $\Sigma$ (assuming that $\chi(\Sigma)<0$ and that $\alpha$ is nonsimple).
In \cite{R}, Rivin asked whether it is possible to bound $d(\alpha)$ in terms of $i(\alpha)$ alone. The main result of this paper, proved in chapter 2, is the following:

\begin{main}
For each homotopy class $\alpha$ on an orientable surface $\Sigma$, $d(\alpha) < 5(i(\alpha)+1)$.
\end{main}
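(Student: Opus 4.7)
The plan is to construct, for a representative $\gamma$ of $\alpha$ in general position realizing $i(\alpha)=k$ self-intersections, an explicit finite cover of $\Sigma$ of degree strictly less than $5(k+1)$ in which $\gamma$ lifts to a single embedded closed curve. The case $k=0$ is immediate: a simple representative embeds in $\Sigma$ itself, so $d(\alpha)=1<5$. From now on assume $k\ge 1$.

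First I would record the local structure. View $\gamma$ as a 4-valent graph $G\subset\Sigma$ with $k$ vertices (the self-intersections) and $2k$ edges (the arcs of $\gamma$ between consecutive crossings), and take a closed regular neighborhood $N=N(\gamma)\subset\Sigma$. Then $N$ is a compact subsurface deformation retracting onto $G$, with $\chi(N)=-k$, and $\pi_1(N)$ free of rank $k+1$. A degree-$d$ cover of $\Sigma$ is encoded by a homomorphism $\phi:\pi_1(\Sigma)\to S_d$; $\gamma$ lifts to a single embedded closed curve exactly when (i) $\phi([\gamma])$ has a fixed point, say $1$, and (ii) at every crossing $p$ of $\gamma$ the two visits of $\gamma$ to $p$ land on distinct sheets of the cover, i.e.\ the ``strand-separation element'' at $p$ (the loop obtained by following $\gamma$ from the basepoint along one strand of $p$ and back along the other) moves the sheet $1$. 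These are $k$ separation conditions to be satisfied simultaneously.

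The heart of the argument is to exhibit such a $\phi$ of degree $d<5(k+1)$. A natural approach is to first work on $N$: because $\pi_1(N)$ is free, one has genuine freedom in defining a permutation representation of $\pi_1(N)$ realizing the $k$ strand-separation conditions with only linearly many sheets, and then extend the resulting cover of $N$ to a cover of $\Sigma$ of the same degree (using that $\Sigma\setminus\operatorname{int}(N)$ is a disjoint union of subsurfaces whose boundary monodromies can be matched to those inherited from $\partial N$). A second, possibly cleaner route, suggested by the intermediate complexity measures $l(\alpha)$ and $n(\alpha)$ defined in the introduction, is to bound those measures linearly by $i(\alpha)$ (say $n(\alpha)\le 5(k+1)-1$ by choosing a fundamental region for $\Sigma$ adapted to $\gamma$, or a comparable bound on $l(\alpha)$) and then show that $d(\alpha)$ is controlled by them via an ``unrolling'' cover whose sheets correspond to the successive crossings of $\gamma$ with the boundary of that fundamental region.

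The main obstacle is precisely the linear-in-$k$ growth of the degree: a naive construction that resolves each crossing independently via its own $\mathbb{Z}/2$ cover gives degree up to $2^k$, whereas the theorem demands a bound of the form $5k+4$. Overcoming this requires a global argument that amortizes the $k$ separation constraints across a common set of sheets — for example by exploiting the cyclic order in which $\gamma$ visits its crossings, or by packaging all strand-separation elements into a single efficient permutation representation derived from a presentation or fundamental polygon. The specific multiplicative constant $5$ should then emerge from a careful case analysis on how $\gamma$ interacts with the chosen combinatorial structure on $\Sigma$ (the polygon, graph, or presentation), together with the additive ``$+1$'' absorbing the base sheet $1$ used for the lift.
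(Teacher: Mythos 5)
Your proposal is an outline of possible strategies, not a proof: at no point do you actually construct a covering of degree less than $5(i(\alpha)+1)$, nor do you prove any of the intermediate bounds you invoke. The two routes you sketch both stop exactly where the real difficulty begins. In the first route, the claim that a covering of the regular neighbourhood $N(\gamma)$ realizing the $k$ strand-separation conditions ``with only linearly many sheets'' can then be extended to a covering of $\Sigma$ \emph{of the same degree} by matching boundary monodromies is unjustified and in general false as stated: when $\gamma$ fills $\Sigma$ the complementary pieces are discs (and boundary-parallel annuli), and a disc can only be glued to a boundary curve of the cover of $N$ that maps with degree $1$ to the corresponding curve of $\partial N$, which is a severe constraint on the permutation representation you are free to choose on the free group $\pi_1(N)$. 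Controlling precisely this closing-up problem is what occupies most of the paper's argument: it replaces the neighbourhood picture by a hierarchy (closed curves $\lambda_i$ cutting $\Sigma$ into $\Sigma'$ and arcs $\mu_j$ joining \emph{different} boundary components of $\Sigma'$, with intersection number controlled by $2|\alpha\cap G|$), proves a refined lifting lemma guaranteeing that each arc of $\alpha$ lifts to an arc joining two distinct boundary curves that project with degree $1$ (condition (*), degree $|\alpha_i\cap\bigcup\mu_j|+2$, using orientability), and then doubles the partial covering so that for every $\lambda_j$ the two sides have matching boundary preimages and can be identified. None of these ideas, nor substitutes for them, appear in your proposal.

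The second route you mention (bounding $n(\alpha)$ or $l(\alpha)$ linearly in $i(\alpha)$ and then ``unrolling'') is indeed the paper's overall scheme ($n(\alpha)\le i(\alpha)+1$ followed by $d(\alpha)<5n(\alpha)$), but you neither prove the bound $n(\alpha)\le i(\alpha)+1$ (which requires the dual-graph/fundamental-polygon construction, and the reduction to curves with minimal self-intersections) nor the covering bound: a straight Scott-type unrolling along the sides of a fundamental polygon works only for surfaces with boundary cut by disjoint arcs, and for a closed surface the remaining side identifications need not close up into a covering, which is again where the hierarchy, condition (*), the treatment of arcs homotopic into the $\lambda_u$'s, and the doubling trick are needed, and where the constant $5$ actually comes from ($|S|+2\sum_{i\in L}(|\alpha_i\cap\bigcup\mu_j|+2)<5|\alpha\cap G|$). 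Your own closing paragraph concedes that the amortization of the $k$ separation constraints is the ``main obstacle'' still to be overcome; since that is precisely the content of the theorem, the proposal as written has a genuine gap and does not establish the bound.
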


As a by-product of the proof of the previous result we obtain:

\begin{mainc}
For each homotopy class $\alpha$ on an orientable surface $\Sigma$, $r(\alpha) \leq max \{3,  i(\alpha)\}$ if 
$\alpha \neq 0$ in $H_1(\Sigma)$, and  $r(\alpha) \leq 5(i(\alpha)+1)$ otherwise.
\end{mainc}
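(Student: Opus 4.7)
The proof splits according to whether $\alpha$ is null-homologous in $\Sigma$; the two cases require different types of covers.

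\emph{Case 1: $[\alpha]\neq 0$ in $H_1(\Sigma;\mathbb{Z})$.} Write $[\alpha]=k\beta$ with $\beta$ a primitive class and $k\geq 1$. Since $\beta$ is primitive, for every $n\geq 2$ there is a surjection $\phi:H_1(\Sigma;\mathbb{Z})\to\mathbb{Z}/n$ with $\phi(\beta)=1$; the associated regular cyclic cover of degree $n$ lifts $\alpha$ if and only if $n\mid k$. Choosing $n$ minimal with $n\nmid k$ gives a cover of degree $n$ to which $\alpha$ does not lift. Whenever $k$ is not a multiple of $6$, this $n$ is already $\leq 3$, matching the bound. The first delicate case is $k=6$, where the smallest cyclic degree jumps to $4$; here one must instead use a non-abelian representation $\rho:\pi_1(\Sigma)\to S_3$ in which $\rho(\alpha)$ is a $3$-cycle. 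Such a $\rho$ exists because $[\alpha]\in 2H_1$ forces $\rho(\alpha)$ to lie in $A_3$, and the surface relation leaves enough freedom to make $\rho(\alpha)$ nontrivial. For larger highly composite $k$ the required cyclic degree eventually exceeds $3$, and the argument is completed by the geometric inequality $k\leq i(\alpha)+1$, which I would prove by analyzing the minimal-position representative of $\alpha$ in an annular neighborhood of a primitive representative of $\beta$ (a curve winding $k$ times in such an annulus cannot close up with fewer than $k-1$ self-intersections).

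\emph{Case 2: $[\alpha]=0$ in $H_1(\Sigma;\mathbb{Z})$.} Every abelian cover lifts $\alpha$, so a non-abelian cover is needed. The target bound $5(i(\alpha)+1)$ matches the bound on $d(\alpha)$ in the main Theorem, which suggests reusing the construction from its proof: that construction produces a transitive permutation representation $\rho:\pi_1(\Sigma)\to S_d$ with $d<5(i(\alpha)+1)$ such that $\rho(\alpha)$ has a fixed point (the basepoint sheet) and the lift there is embedded. I would then argue that a small perturbation of the same construction—relabelling sheets across one self-intersection, or twisting one generator—yields a representation $\rho':\pi_1(\Sigma)\to S_{d'}$ of comparable degree $d'\leq 5(i(\alpha)+1)$ in which $\rho'(\alpha)$ acts without fixed points. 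The cover corresponding to $\rho'$ then has degree at most $5(i(\alpha)+1)$ and does not lift $\alpha$.

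\emph{Main obstacle.} The two subtle points are the geometric inequality $k\leq i(\alpha)+1$ in Case 1 and the degree-preserving modification of the permutation representation in Case 2. I expect Case 2 to be the hardest step: the bound is tight enough that one cannot afford to use the main Theorem as a black box and compose with a further finite cover to kill the lift (the product of degrees would overshoot $5(i(\alpha)+1)$), so the non-lifting cover has to be built inside the construction of the main Theorem rather than on top of it.
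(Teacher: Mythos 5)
Your Case 1 follows the same basic route as the paper (kill $\alpha$ with a cyclic cover coming from homology), but two of your steps do not hold up. First, the representation $\rho:\pi_1(\Sigma)\to S_3$ with $\rho(\alpha)$ a $3$-cycle that you invoke for $k=6$ need not exist: take $\alpha=\gamma^6$ with $\gamma$ simple and non-separating, so $k=6$; every element of $S_3$ has order dividing $6$, hence \emph{every} homomorphism $\pi_1(\Sigma)\to S_3$ kills $\alpha$, and in fact $r(\gamma^6)=4$. That case is only saved because $i(\gamma^6)=5$, so the degree-$4$ cyclic cover already fits under $\max\{3,i(\alpha)\}$; in other words, $6\mid k$ must be handled the same way as your ``larger highly composite $k$'', not via $S_3$. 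Second, and more seriously, your proposed proof of the key inequality $k\le i(\alpha)+1$ is not valid: divisibility of $[\alpha]$ in $H_1$ does not make $\alpha$ freely homotopic into an annular neighborhood of a representative of $\beta$ ($\alpha$ need not be a proper power in $\pi_1$), so there is no ``curve winding $k$ times in an annulus'' to analyze. The paper obtains the needed bound differently: a graph meeting $\alpha$ in $n(\alpha)\le i(\alpha)+1$ points cuts $\Sigma$ into a polygon, and the dual closed curves $h_i$ generate $H_1$ while meeting $\alpha$ geometrically at most $n(\alpha)$ times; some algebraic intersection $m=[\alpha]\cdot[h_i]\neq 0$, the smallest non-divisor $N$ of $m$ satisfies $N<m$ unless $m\le 2$, and the mod-$N$ cyclic cover dual to $h_i$ gives $r(\alpha)\le\max\{3,i(\alpha)\}$. (Since $k\mid m$, this argument also proves your inequality $k\le n(\alpha)\le i(\alpha)+1$; an intersection-with-arcs version is what one uses when $\partial\Sigma\neq\emptyset$.)

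In Case 2 your instinct is right that the cover must be produced inside the construction of the main theorem (composing with a further cover overshoots the degree), but ``relabelling sheets across one self-intersection, or twisting one generator'' is not an argument: you give no reason such a perturbation exists, still defines a covering of $\Sigma$ of the same degree, or actually prevents $\alpha$ from lifting rather than merely changing which lift is embedded. The paper's mechanism is concrete and simpler: in the proof of the main theorem the covering is assembled by gluing coverings of the surface cut along the $\lambda_i$'s, following $\alpha$ and closing the chain cyclically so that the lifted curve closes up; to get the non-lifting cover one inserts one extra copy of the cut-open surface at the end of the chain before completing the gluings, so the lift of $\alpha$ ends on a different sheet and becomes an embedded arc instead of a closed curve, while the degree estimate $5n(\alpha)\le 5(i(\alpha)+1)$ is unchanged. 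Without a device of this kind, your Case 2 is a statement of intent rather than a proof.
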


The \textit{geometric presentations} of $\pi_1(\Sigma)$ are those obtained by cutting $\Sigma$ along a graph to get a polygonal disc, and have one generator for each edge of the graph and one relation for each vertex in the interior of $\Sigma$. 
We define $n(\alpha)$ as the minimum length of the words in the conjugacy class of $\alpha$ among all these geometric presentations of $\pi_1(\Sigma)$. It is immediate that $n(\alpha)$ equals the minimum number of times that a curve in the class $\alpha$ crosses a fundamental region for $\Sigma$.
The \textit{standard presentations} of $\pi_1(\Sigma)$ are geometric presentations with a minimum number of generators and at most one relator. They arise from graphs that have just one vertex if $\Sigma$ is closed or consist of disjoint arcs if $\Sigma$ has boundary and are the most commonly studied presentation for $\pi_1(\Sigma)$, which is why it makes sense to define $l(\alpha)$ as the minimum word length in each class $\alpha$ taken among these presentations.

\begin{main} 
If $\alpha$ is a free homotopy class of curves on a surface $\Sigma$ then $n(\alpha) \leq i(\alpha)+1$.
\end{main}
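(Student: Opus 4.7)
My plan is to construct, for any representative $c$ of $\alpha$ with $k = i(\alpha)$ transversal self-intersections, a graph $G \subset \Sigma$ in general position with $c$ such that $\Sigma \setminus G$ is a single open disc and $|G \cap c| = k+1$; any such $G$ immediately witnesses $n(\alpha) \leq k+1$. The graph $G$ will be the $1$-skeleton of the Poincaré dual of a cell decomposition of $\Sigma$ canonically associated to $c$, with the edges dual to a carefully chosen spanning tree removed.

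View $c$ as a $4$-valent graph $\Gamma \subset \Sigma$ with $k$ vertices and $2k$ edges. If $c$ does not fill $\Sigma$, in each non-disc complementary region $N_j$ (of genus $g_j$ with $b_j$ boundary components on $\Gamma$) I add a minimal cut system $H_j$ of $2g_j + b_j - 1$ disjoint arcs with endpoints on $\partial N_j$, slicing $N_j$ into an open disc. After subdividing $\Gamma$ at the endpoints of $H := \bigsqcup_j H_j$, the graph $\Gamma' := \Gamma \cup H$ is a cell decomposition of $\Sigma$ with all faces open discs, $V' = k + 2|H|$ vertices, and $E' = 2k + 3|H|$ edges.

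Now let $G^*$ be the $1$-skeleton of the Poincaré dual of $\Gamma'$: one vertex inside each face, one edge crossing each edge of $\Gamma'$ transversally. Choose a spanning tree $T$ of $\Gamma'$ with all of its $V'-1$ edges lying on $c$; this is possible because the subdivided $c$ is connected and contains every vertex of $\Gamma'$, so any spanning tree of the subdivided $c$ automatically spans $\Gamma'$. Let $T^* \subset G^*$ be the edges dual to $T$, and set $G := G^* \setminus T^*$. Topologically $\Sigma \setminus G$ is the union of the $V'$ open $2$-cells of the dual complex glued along the $V'-1$ open $1$-cells of $T^*$ in the tree pattern encoded by $T$, and gluing $V'$ open discs by $V'-1$ arcs arranged in a tree yields a single open disc. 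On the other hand, $G$ has $E' - (V'-1) = k + |H| + 1$ edges: the $|H|$ of them dual to arcs in $H$ are disjoint from $c$, and the remaining $k+1$ are dual to $c$-edges not in $T$, each crossing $c$ transversally once. Therefore $|G \cap c| = k+1$, giving $n(\alpha) \leq i(\alpha)+1$.

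The most delicate point is the choice of the spanning tree $T$ entirely inside $c$: this is precisely what ensures that removing $T^*$ kills off the many dual edges of $G^*$ that cross $c$, while leaving intact the dual edges of $H$-arcs, which contribute nothing to the count. A minor technical point is the degenerate case $k=0$ of simple curves, which needs the preliminary step of subdividing $c$ at a basepoint so that it becomes a legitimate graph before the construction begins.
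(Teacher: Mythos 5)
Your closed-surface argument is correct, and it is essentially the paper's own proof read from the dual side: the paper's dual graph $G'$ is your $G^*$, and gluing the dual quadrilaterals around the self-intersection points into a single polygon along $s-1$ dual edges is exactly your deletion of the dual edges of a spanning tree; the counts agree. The genuine gap is the case $\partial \Sigma \neq \emptyset$, which the theorem covers (for the paper $n(\alpha)$ and this bound are defined for compact surfaces with boundary, and its proof explicitly places dual vertices of boundary regions on $\partial \Sigma$).

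Concretely: if a complementary region $N_j$ contains circles of $\partial \Sigma$, then $2g_j+b_j-1$ arcs with endpoints on the $\Gamma$-part of $\partial N_j$ do not slice $N_j$ into a disc (one needs $2g_j+b_j+\beta_j-1$ arcs, where $\beta_j$ is the number of boundary circles of $\Sigma$ inside $N_j$), so your $\Gamma'$ is not a cell decomposition with disc faces and the dual construction does not apply. If instead you let arcs of $H$ end on $\partial \Sigma$, their endpoints become vertices of $\Gamma'$ that do not lie on $c$, and then there is no spanning tree of $\Gamma'$ contained in $c$, so the step you yourself flag as the delicate one fails as stated. Worse, independently of how $H$ is chosen, your final graph $G$ lies in the interior of $\Sigma$; cutting $\Sigma$ along such a $G$ yields a surface whose boundary contains every circle of $\partial \Sigma$ plus at least one circle made of doubled edges of $G$, hence at least two boundary circles, so it is never a single disc when $\partial \Sigma \neq \emptyset$. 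The repair is the one the paper uses: treat boundary-parallel annuli as admissible filling regions and put the dual vertex of each boundary region on $\partial \Sigma$ (equivalently, incorporate $\partial \Sigma$ into the complex and allow $T$ to use edges whose duals are disjoint from $c$); with that modification your Euler-characteristic bookkeeping goes through unchanged. A minor cosmetic point: $2g_j+b_j-1$ presumes orientable regions, but since $|H|$ cancels in your count you can simply say ``a minimal system of arcs cutting $N_j$ into a disc'' and also cover non-orientable $\Sigma$.
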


It turns out that $l(\alpha)$ can be much larger than $n(\alpha)$ and that it cannot be bounded in terms of $n(\alpha)$ or $i(\alpha)$ alone. This was one of the difficulties in proving the main result, but we show the following. 

\begin{main} 
If $\alpha$ is a free homotopy class of curves in a closed surface $\Sigma$ then $l(\alpha)\leq (n(\alpha)+2) \cdot 2g(\Sigma)$.
\end{main}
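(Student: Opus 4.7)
The plan is to start with a geometric graph $G \subset \Sigma$ that realizes $n(\alpha)$---so some representative $\gamma \in \alpha$ in minimal position meets $G$ transversally in $n(\alpha)$ points---and to convert $G$ into a standard graph $G'$ (one vertex, $2g$ loops) while controlling how much the crossing count with $\gamma$ can grow. Since $\Sigma \setminus G$ is a single disc, the Euler characteristic forces $E = V + 2g - 1$, where $V$ and $E$ are the numbers of vertices and edges of $G$. Choose a spanning tree $T \subset G$ (with $V-1$ edges) and a basepoint $p$ (a vertex of $G$); the $V-1$ tree edges will be ``absorbed'' into $p$, while the $2g$ non-tree edges $e_1, \ldots, e_{2g}$ will become the loops of the standard graph.

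For each $i$, construct a loop $\ell_i$ based at $p$ by concatenating the unique $T$-path from $p$ to one endpoint of $e_i$, the edge $e_i$, and the unique $T$-path back from the other endpoint to $p$. Push these loops slightly off $T$, into a thin tubular neighborhood of $T$, so they become pairwise disjoint except at $p$; their union $G'$ is then an embedded graph in $\Sigma$ with one vertex and $2g$ loops. Verify using the Euler characteristic (or by inspecting the induced cellular structure) that $\Sigma \setminus G'$ is still a disc, so $G'$ realizes a standard presentation of $\pi_1(\Sigma)$.

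To estimate $l(\alpha) \leq |\gamma \cap G'|$, count crossings as follows. Each crossing of $\gamma$ with a non-tree edge $e_i$ survives as a single crossing with the corresponding loop $\ell_i$, contributing $|\gamma \cap (G \setminus T)| = n(\alpha) - |\gamma \cap T|$ in total. Each crossing of $\gamma$ with a tree edge $t \in T$ is replicated once for every loop whose $T$-path traverses $t$, hence at most $2g$ times. Finally, the push-off step introduces a bounded correction near the endpoints of each tree path, at most $2$ per loop and hence at most $4g$ in total. Combining,
\[
|\gamma \cap G'| \leq \bigl(n(\alpha) - |\gamma \cap T|\bigr) + 2g\,|\gamma \cap T| + 4g \leq 2g\,n(\alpha) + 4g = 2g\bigl(n(\alpha)+2\bigr).
\]

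The main obstacle is the push-off step: one must simultaneously realize the $\ell_i$ as pairwise disjoint embedded loops meeting only at $p$, confirm that $\Sigma \setminus G'$ remains a disc, and bound correctly the boundary intersections introduced when the loops leave the tubular neighborhood of $T$. A judicious choice of spanning tree---for instance, one that keeps $|\gamma \cap T|$ small, or that spreads the tree edges evenly among the loops---will be important to keep these corrections under control and obtain the sharp form of the argument.
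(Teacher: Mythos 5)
Your overall strategy---take a graph $G$ realizing $n(\alpha)$, pick a spanning tree $T$ and a basepoint $p$, and replace $G$ by a one-vertex graph of $2g$ loops while tracking how crossings multiply---is viable, and it is genuinely different from the proof in the paper (which goes through the peeling invariant: Lemma \ref{pn} homotopes $\alpha$ so that $p_m\le\tfrac12 n(\alpha)+1$ via the star configuration, and Theorem \ref{lp} builds a one-vertex fundamental polygon from the complementary regions of $\alpha$, giving $l(\alpha)\le 2p_m(\alpha)(2-\chi(\Sigma))$). However, your key counting step has a genuine gap. Each loop $\ell_i$ contains \emph{two} $T$-paths, one to each endpoint of $e_i$, and both may traverse the same tree edge $t$; this happens exactly when both endpoints of $e_i$ lie in the component of $T-t$ not containing $p$. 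A crossing of $\gamma$ with $t$ is replicated once for each \emph{traversal} of $t$ by the loops, and the total number of traversals equals the number of endpoints of non-tree edges separated from $p$ by $t$, which can be as large as $4g$, not $2g$. (For instance, if $G$ is a rose of $2g$ loops at a vertex $v$ together with a path sticking into the complementary disc, and you take $T$ to be that path with $p$ at its free end, then every loop traverses every tree edge twice.) With replication factor $4g$ your computation only yields roughly $4g\,n(\alpha)$, about twice the claimed bound; your closing remarks about a ``judicious choice of spanning tree'' gesture at the difficulty but do not supply the property you need, and the ``$+2$ per loop'' push-off correction is asserted rather than proved.

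The missing idea can be supplied, and then your route actually works and is arguably more elementary than the paper's. Either choose $p$ to be a centroid of $T$ weighted by the $4g$ endpoints of the non-tree edges, so that for every tree edge the far component carries at most $2g$ of those endpoints and hence every tree edge is traversed at most $2g$ times; or, more cleanly, collapse $T$ to a point (the quotient is homeomorphic to $\Sigma$ and the image of $G$ is a one-vertex graph cutting it into a disc) and push each of the $t=|\gamma\cap T|$ passages of $\gamma$ through the vertex off to the side meeting at most half of the $4g$ edge-ends; this is a free homotopy of $\gamma$, so it is allowed when bounding $l(\alpha)$. Either repair gives $l(\alpha)\le (n(\alpha)-t)+2g\,t\le 2g\,n(\alpha)\le 2g(n(\alpha)+2)$, with no boundary correction term needed, which even sharpens the stated bound and matches the paper's extremal example of a power of a separating simple curve. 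As written, though, the central estimate is not justified and fails in the worst case by a factor of $2$.
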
 

The ideas of the proof allow us to give a bound for $m(\alpha)$, which corresponds to the minimum number of times that that a curve in a homotopy class $\alpha$ crosses some simple essential curve on $\Sigma$.

\begin{main} 
If $\alpha$ is a free homotopy class of curves on a compact orientable surface $\Sigma$ then
$m(\alpha) \leq 2\sqrt{i(\alpha)}+1$.
\end{main}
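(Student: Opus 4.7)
The plan is to pass to the Poincaré dual of the image of $\alpha$ and find a short essential cycle there via a ball-growth argument. Set $n=i(\alpha)$ and realise $\alpha$ in minimal position. If $\alpha$ does not fill $\Sigma$, its complement contains a simple essential closed curve and $m(\alpha)=0$, so assume $\alpha$ fills $\Sigma$. Then the image of $\alpha$ is a $4$-valent graph $X\subset\Sigma$ with $n$ vertices, $2n$ edges, and complementary disks, and by standard minimal-position arguments $X$ has no monogon or bigon faces. Let $D\subset\Sigma$ be the Poincaré dual of $X$: the vertices of $D$ are the faces of $X$, the edges of $D$ transversely cross the edges of $X$ once each, and the $n$ faces of $D$ are the quadrilaterals around the self-intersections of $\alpha$. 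In particular every vertex of $D$ has degree at least $3$.

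The key observation is that a simple cycle in $D$ which is essential in $\Sigma$ realises a simple essential closed curve crossing $\alpha$ exactly once per edge of the cycle. It therefore suffices to exhibit a simple essential cycle of $D$ of length at most $2\sqrt{n}+1$. Set $k=\lceil\sqrt{n}\,\rceil$ and argue by contradiction: suppose every simple essential cycle in $D$ has length at least $2k+2$. Then for any vertex $v_0$ of $D$, every loop based at $v_0$ in $D$ of length at most $2k$ is null-homotopic in $\Sigma$, so the combinatorial ball $B_k(v_0)\subset D$ embeds in $\Sigma$ as a disk whose faces are all quadrilaterals and whose interior vertices have degree at least three.

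A planar isoperimetric / shell-count estimate for quadrangulations of a disk with minimum degree $3$ then shows that $B_k(v_0)$ contains at least $2k^2-2k$ quadrilateral faces. Since $D$ has only $n$ faces in all of $\Sigma$ and $2k^2-2k>n$ when $k=\lceil\sqrt{n}\,\rceil$ and $n\ge 4$ (smaller cases being handled directly), this is a contradiction. Hence a simple essential cycle in $D$ of length at most $2k+1\le 2\sqrt{n}+1$ exists, and the corresponding simple essential closed curve in $\Sigma$ crosses $\alpha$ at most $2\sqrt{n}+1$ times.

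\textbf{Main obstacle.} The technical heart is the quadratic lower bound for the number of quadrilateral faces in an embedded ball of combinatorial radius $k$ in $D$: since vertex degrees of $D$ can be arbitrarily large (a face of $X$ may have many sides), one must establish this using only the two local conditions, namely that all faces are $4$-gons and that every vertex has degree at least three. The cleanest route is to lift $B_k$ to the universal cover of $\Sigma$, where $D$ lifts to a non-positively curved quadrangulation of the plane or of the hyperbolic plane, and apply the standard planar isoperimetric inequality for such quadrangulations; extracting the sharp constant needed for the bound $2\sqrt{n}+1$ (rather than a weaker $O(\sqrt{n})$ estimate) is the most delicate step.
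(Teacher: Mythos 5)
Your overall strategy is the same as the paper's: dualize the filled picture, view an essential curve of small intersection with $\alpha$ as a short essential cycle in the dual complex, and derive a contradiction from quadratic growth of balls versus the total count $i(\alpha)$. However, the step you yourself flag as the ``main obstacle'' is a genuine gap, and moreover the statement you propose to use is false as stated. The two local conditions you isolate --- all faces of $D$ are quadrilaterals and all vertex degrees are at least $3$ --- do \emph{not} force the bound $2k^2-2k$ on the number of faces of $B_k$, nor even quadratic growth. Combinatorial Gauss--Bonnet for quadrangulations gives curvature $1-\tfrac{\deg v}{4}$ at a vertex, so a degree-$3$ vertex (which really occurs: a triangular complementary region of $\alpha$) carries \emph{positive} curvature, and the lift of $D$ to the universal cover need not be a non-positively curved square complex; the CAT(0)-type isoperimetric inequality you invoke requires interior degree at least $4$. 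For a concrete counterexample to the purely local claim, quadrangulate the boundary surface of a semi-infinite square tube of cross-section $N\times N$, capped at one end, by unit squares: this is a quadrangulation of the plane, every face is a $4$-gon, every vertex has degree $3$ or $4$, yet the ball of radius $k$ about a cap vertex contains only about $N^2+4Nk$ faces, i.e.\ linearly many in $k$. So no argument ``using only the two local conditions'' can exist; quadratic growth must come from somewhere else.

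The missing idea --- and the one the paper uses --- is a global ``no shortcut'' argument anchored to a curve $c$ realizing the minimum $m=m(\alpha)$. Working in the universal cover, one takes the line $l$ above $c$ through the base region $R$ and looks at the two chains of regions (alternating between distance $t$ and $t-1$ from $R$) joining the two regions of distance $t$ crossed by $l$. If either chain had fewer than $2t+1$ regions, an arc of $c$ could be rerouted through it to produce an essential curve meeting $\alpha$ fewer than $m$ times, contradicting minimality of $c$; hence each shell at distance $t\le\frac{m-1}{2}$ contains at least $2t$ regions, giving $r(t)\ge t^2+2$ regions in the ball, all projecting injectively to $\Sigma-\alpha$, and the comparison with $|\Sigma-\alpha|\le i(\alpha)+2$ yields $m\le 2\sqrt{i(\alpha)}+1$ (with a separate half-shell count when $m$ is even). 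Until you replace your local isoperimetric claim by an argument of this kind, the proof does not close. A secondary, fixable issue: with $k=\lceil\sqrt{n}\,\rceil$ your contradiction only produces an essential cycle of length at most $2\lceil\sqrt{n}\,\rceil+1$, which exceeds $2\sqrt{n}+1$ whenever $n$ is not a perfect square; to get the stated constant you need the finer parameterization in terms of $\frac{m-1}{2}$ and the even/odd case analysis.
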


One might expect a much better bound for $m(\alpha)$ in terms of $i(\alpha)$, but we give examples showing that 
$m(\alpha)$ can grow at least as $\frac{\sqrt{3}}{2}\sqrt{i(\alpha)}$.
As for other relations between the aforementioned complexity measures,
it is not hard to show that $i(\alpha) \leq \frac{1}{2}n(\alpha)(n(\alpha)-1)$ and that this bound is sharp.
Since there are curves with any number of self-intersections that lift to embeddings in coverings of degree 2, $i(\alpha)$ is not bounded by any function of $d(\alpha)$ alone. It is also clear that there is no bound for $n(\alpha)$ in terms of $m(\alpha)$.

\begin{remark}
The same measures of complexity can be used for homotopy classes of immersed arcs on a surface with boundary
(either fixing their endpoints or leaving them free to move along $\partial \Sigma$).
\end{remark}

\medskip

Some questions remain.
Many of the bounds given here are sharp, but the examples suggest that the bounds for $d(\alpha)$ and  $r(\alpha)$ in terms of $n(\alpha)$ or $i(\alpha)$  can be improved. Is it true that $d(\alpha) \leq 2n(\alpha)$?
Is there a better bound for $r(\alpha)$?
It also seems natural to ask whether or not these complexity measures behave monotonously.
If $\alpha$ is a free homotopy class of curves in $\Sigma$, then the `subloops' of $\alpha$ (the classes $\alpha'$ corresponding  to the subloops of a curve with minimal self-intersections in $\alpha$) depend only on $\alpha$. It follows  that $n(\alpha')< n(\alpha)$, $l(\alpha') < l(\alpha)$ and $m(\alpha') \leq m(\alpha)$, but it is not true that $r(\alpha') \leq r(\alpha)$. Is it true that $d(\alpha') \leq d(\alpha)$?

\bigskip

{\sl About the general structure of the paper}.

\bigskip

Chapter 1 starts by showing that $n(\alpha)$, $l(\alpha)$ and $m(\alpha)$ can be obtained by considering only curves with minimal self-intersections in $\alpha$. Then we prove that $n(\alpha)-1 \leq i(\alpha) \leq n(\alpha)(n(\alpha)-1)/2$, and show that $l(\alpha) \leq 2n(\alpha)-2$ if $\Sigma$ is planar and $\alpha$ is non-simple.
Then we introduce the notion of a \textit{peeling} for a curve (or more generally a graph) on a surface. This notion allows us to bound $l(\alpha)$ in terms of $n(\alpha)$ and $\chi(\Sigma)$ and to get the bound for $m(\alpha)$ in terms of $i(\alpha)$.

\

Chapter 2 is mainly made up of Theorem \ref{dn} and it's proof. Although the main idea of the proof is simple (naively speaking, it follows the method of Lemma \ref{s}) the details are tricky, and it requires a careful analysis of the different cases to guarantee that our construction works in its most general setting:

We start from a graph that cuts $\Sigma$ into a disc and use it to find a hierarchy for $\Sigma$, i.e., a collection of curves that cut $\Sigma$ into a surface with boundary $\Sigma'$ and a collection of arcs that cut $\Sigma'$ into a disc. Finding a hierarchy where the arcs connect different boundary components of $\Sigma'$ (while controlling the number of intersections with $\alpha$) allows us to lift the segments of $\alpha$ in $\Sigma'$ to coverings of $\Sigma'$ that can be glued together to get a partial covering of $\Sigma$ (that this is always possible follows from lemma \ref{p}).  To guarantee that this partial covering can be assembled so that it yields a covering of $\Sigma$ where the entire curve lifts simply, we `double' it.
Finally, we prove the corollary to the main theorem and some other bounds in the case that $\Sigma$ is planar.


\vskip10pt

\section{Intersections and word length}

We can define the quantities $i(\alpha)$, $l(\alpha)$, $n(\alpha)$ and $m(\alpha)$ for fixed curves on a surface, and then define them for each homotopy class as the minimum for all the curves in that class.
In order prove the inequalities relating $i(\alpha)$ with the other quantities for a homotopy class $\alpha$ it would be enough to prove the inequalities for all the curves in the class, but the inequalities for fixed curves do not always hold, and we will need to use the following lemma.


\begin{lemma} If $\alpha$ is a free homotopy class of curves in $\Sigma$, then $l(\alpha)$, $n(\alpha)$ and $m(\alpha)$ are attained by curves with minimal self-intersections in $\alpha$.
\end{lemma}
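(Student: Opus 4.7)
The plan is to take, for each of the three quantities, a curve in $\alpha$ that realizes the minimum together with the auxiliary graph (or simple essential curve) witnessing it, and then to reduce the self-intersections of the curve without increasing the intersection count. The reductions are supplied by the theorem of Hass--Scott: a curve $\gamma$ with non-minimal self-intersections on $\Sigma$ admits an embedded singular monogon or bigon, i.e., an embedded disc $D \subset \Sigma$ with $D^\circ \cap \gamma = \emptyset$ whose boundary is an arc of $\gamma$ (respectively two arcs) meeting at a self-intersection of $\gamma$ (respectively two). Pushing the boundary arc(s) across $D$ produces a homotopic curve with one or two fewer self-intersections.

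For $n(\alpha)$, I take $\gamma_0 \in \alpha$ and a graph $G_0$ cutting $\Sigma$ into a disc so that $|\gamma_0 \cap G_0| = n(\alpha)$, and isotope $G_0$ into minimal position with $\gamma_0$. Suppose $\gamma_0$ has non-minimal self-intersections. In the monogon case, pushing the boundary arc $a$ across the disc $D$ produces $\gamma_0' \in \alpha$ with
\[ |\gamma_0' \cap G_0| \;=\; |\gamma_0 \cap G_0| - |a \cap G_0|. \]
Minimality of $n(\alpha)$ forces $|a \cap G_0|=0$, so the reduction preserves the count while strictly decreasing self-intersections. In the bigon case with boundary arcs $a_1, a_2$, pushing $a_i$ past $a_j$ gives
\[ |\gamma_0' \cap G_0| \;=\; |\gamma_0 \cap G_0| - |a_i \cap G_0| + |a_j \cap G_0|, \]
and applying minimality with both choices of $i \in \{1,2\}$ yields $|a_1 \cap G_0|=|a_2 \cap G_0|$; again the count is preserved. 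Iterating produces a representative of $\alpha$ with minimal self-intersections that attains $n(\alpha)$ against $G_0$.

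The same argument handles $l(\alpha)$ (restrict $G_0$ to a standard presentation graph; the reduction does not modify $G_0$, which thus remains standard) and $m(\alpha)$ (replace $G_0$ by a simple essential curve $c_0$ in minimal position with $\gamma_0$ realizing $i(\gamma_0, c_0) = m(\alpha)$).

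The delicate step is the appeal to Hass--Scott guaranteeing that the reducing disc $D$ can be taken with $D^\circ \cap \gamma_0 = \emptyset$: this cleanness makes the push-across a local homotopy whose effect on crossings with $G_0$ (or $c_0$) is exactly the one predicted by the formulas above, with no unintended contributions from arcs of $\gamma_0$ passing through $D$. Once this is in hand, the rest is the straightforward ``minimality forces equality'' computation.
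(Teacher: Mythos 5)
Your counting is fine: the formulas for the change in $|\gamma_0\cap G_0|$ under a push across a clean monogon or bigon are correct, and the ``minimality forces equality'' trick does show such a move cannot change the count. The genuine gap is exactly at the step you flag as delicate, and it cannot be waved through: Hass--Scott prove that a curve with excess self-intersections admits a \emph{singular} monogon or bigon; they do not (and cannot) provide an embedded disc $D$ with $D^{\circ}\cap\gamma_0=\emptyset$. An embedded monogon or bigon with interior disjoint from $\gamma_0$ is precisely a complementary face of $\gamma_0$ with one or two sides, and there are generic curves with excess self-intersections having no such face: take a generic null-homotopic planar pattern none of whose faces is a monogon or bigon (for instance the underlying closed curve of the standard alternating diagram of the knot $8_{18}$, whose faces are all triangles and quadrilaterals) and insert it as a null-homotopic tangle into any curve on $\Sigma$; the resulting curve has excess self-intersections but admits no clean reducing disc, so your iteration stalls before reaching minimal position. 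Falling back on the singular version does not rescue the scheme: replacing $a_1$ by a push-off of $a_2$ across a singular bigon removes the two corners and the crossings along $a_1$, but creates one new crossing for every strand of $\gamma_0$ meeting $a_2$, and in the singular case such strands can exist, so the self-intersection number need not decrease. (This is also why one cannot simply substitute monotone Reidemeister-type reductions: the type III moves they require give no control over crossings with the fixed witness $G_0$.)

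The paper's proof avoids this entirely by quoting a stronger known statement: by \cite{HS} and \cite{N}, every essential curve can be cut and pasted, i.e.\ modified only inside small neighbourhoods of some of its self-intersection points, to a freely homotopic curve with minimal self-intersections. Since the modification is supported away from a fixed transverse graph or simple curve in general position, the intersection number with the witness for $n(\alpha)$, $l(\alpha)$ or $m(\alpha)$ cannot increase, and the lemma follows at once. To repair your argument you need an input of this strength (a homotopy to minimal position supported near the self-intersection points, or some other reduction process that provably never increases crossings with the fixed $G_0$ or $c_0$); the empty monogon/bigon criterion you invoke is strictly stronger than what Hass--Scott provide and is false in general.
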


\begin{proof}
It follows from \cite{HS} and \cite{N} that each essential curve on a surface $\Sigma$ can be cut and pasted to obtain a freely homotopic curve with minimal self-intersections, so the images of the two curves differ only in a small neighbourhood of some self-intersection points. So the number of self-intersections of the curve can be minimized without increasing its intersections with a given curve or graph on $\Sigma$.
\end{proof}

Let's start with the relation between $n(\alpha)$ and $i(\alpha)$. It is known that if
$\alpha$ is a curve with minimal self-intersections on the surface $\Sigma$ then $i(\alpha) \leq \frac{1}{2}n(\alpha)(n(\alpha)-1)$.
For, if $n(\alpha)=n$ then $\Sigma$ can be cut into a disc $D$ that is crossed by $\alpha$ along $n$ arcs. As $\alpha$ has minimal self-intersections these arcs are embedded and intersect each other at most once, so the number of self-intersections of $\alpha$, which is the number of intersections between these arcs, is at most  $n(n-1)/2$.

\begin{theorem}{}  
\label{ni}
If $\alpha$ is an immersed curve on a compact surface $\Sigma$, then $n(\alpha) \leq i(\alpha)+1$. 
\end{theorem}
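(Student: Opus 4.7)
The plan is to construct, for $\alpha$ realizing its minimal self-intersection number $i=i(\alpha)$ (available by the preceding lemma), a graph $G\subset\Sigma$ with $\Sigma\setminus G$ a single open disc and $|\alpha\cap G|\leq i+1$; this directly yields $n(\alpha)\leq i+1$. If $\alpha$ is null-homotopic, $n(\alpha)=0$ trivially. Otherwise, the image $\Gamma:=\alpha(S^{1})$ is a $4$-valent graph with $i$ vertices and $2i$ edges, and when $i=0$ I add an auxiliary vertex on the loop so that the same combinatorics apply.

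The idea is a dual-graph construction. First I would enlarge $\Gamma$ to a filling graph $\Gamma'$: in each non-disc component of $\Sigma\setminus\Gamma$, add disjoint proper arcs whose endpoints lie on vertices of $\Gamma$ (available because every such component has at least one vertex of $\Gamma$ on its boundary) until the component is cut into discs. Form the dual graph $\Gamma'^{*}$, with one vertex in each face of $\Sigma\setminus\Gamma'$ and one edge transverse to each edge of $\Gamma'$. Pick a spanning tree $T$ of $\Gamma$; since $\Gamma$ and $\Gamma'$ share their vertex sets, $T$ is also a spanning tree of $\Gamma'$, with $i-1$ edges. Define
\[
G=\Gamma'^{*}\setminus T^{*},
\]
deleting from the dual graph exactly the edges dual to $T$.

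The main technical step is to verify that $\Sigma\setminus G$ is a single disc, which I would argue by iterated face-merging in the dual decomposition. Each removed dual edge $e^{*}$ (for $e\in T$) separates two distinct dual faces (distinct because trees contain no loops), so its removal merges two discs into one disc. Ordering the edges of $T$ so that they are added one at a time to a growing subtree, the $i-1$ successive merges amalgamate the $i$ initial dual faces into $1$. I expect the main subtleties to arise in the non-filling case: guaranteeing that auxiliary arcs can always be anchored at vertices of $\Gamma$, and verifying that their dual edges, though lying in $G$, meet $\alpha$ nowhere (they sit inside complementary regions of $\alpha$). With that in hand, the count is immediate: every edge of $G$ meets $\Gamma'$ in one transverse point, and only the edges dual to edges of $\Gamma\setminus T$ actually cross $\alpha$. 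There are $2i-(i-1)=i+1$ such edges, so $|\alpha\cap G|\leq i+1$, as required.
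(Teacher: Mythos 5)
For a closed surface your argument is correct and is essentially the paper's proof in dual language: the dual faces of $\Gamma'^{*}$ are the quadrilateral-type cells around the $i$ double points, and deleting the $i-1$ edges dual to a spanning tree of $\Gamma$ is exactly the paper's operation of gluing those $i$ cells along $i-1$ dual edges into one fundamental polygon, leaving $2i-(i-1)=i+1$ dual edges, each crossing $\alpha$ once. Your face-merging bookkeeping (growing the tree one edge at a time, so each deletion merges two distinct discs along one arc) is sound, and your handling of the non-filling case by augmenting $\Gamma$ with arcs anchored at vertices, whose dual edges stay inside complementary regions of $\alpha$, works in the closed case because every non-disc complementary region of a connected $4$-valent $\Gamma$ (with your auxiliary vertex when $i=0$) has a vertex on each of its boundary circuits.

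The genuine gap is the case $\partial\Sigma\neq\emptyset$, which the theorem (``compact surface'') includes and the paper treats explicitly. Your auxiliary arcs are required to have endpoints at vertices of $\Gamma$, which are interior points of $\Sigma$; but any complementary component whose frontier contains a whole boundary circle of $\Sigma$ can never be cut into discs by arcs disjoint from $\partial\Sigma$ (a piece containing that circle in its boundary always survives and is at best an annulus), so the filling graph $\Gamma'$ you need does not exist. Relatedly, your final graph $G$ is entirely interior, so when $\Sigma$ has boundary its complement contains $\partial\Sigma$ and cannot be a single disc, hence does not give a fundamental polygonal region at all. The fix is the move the paper makes: allow the dual vertex of each boundary-adjacent region to lie on $\partial\Sigma$ (equivalently, allow auxiliary arcs and dual edges to end on $\partial\Sigma$, and treat boundary-parallel annuli as admissible regions); without some such modification your construction does not prove the stated inequality for compact surfaces with boundary.
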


\begin{proof}{}
For the first part we need to show that if $\alpha$ is a curve with $n$ self-intersections then there exists a graph $G$ that cuts $\Sigma$ into a disc and intersects $\alpha$ no more than $n+1$ times.
Assume first that $\alpha$ {\sl fills} $\Sigma$, so the complementary regions are discs and boundary-parallel annuli.
Let $G'$ be the dual graph for $\alpha$ in $\Sigma$, that is, $G'$ has one vertex $p_i$ for each complementary region $R_i$ and one edge between $p_i$ and $p_j$ for each arc of $\alpha$ where $R_i$ meets $R_j$. 
So $G'$ intersects $\alpha$ once at each edge.
If $\Sigma$ has non-empty boundary we place the vertices of $G'$ corresponding to boundary regions in $\partial \Sigma$. 
Since $\alpha$ fills $\Sigma$, $G' \cup \alpha$ cuts $\Sigma$ into small quadrilaterals that have one corner in $p_i$, one corner in an intersection point of $\alpha$ and two corners in the middle points of edges of $G'$, so $G'$ cuts $\Sigma$ into quadrilaterals $C_i,C_2,...,C_n$ (each formed by the 4 small quadrilaterals around an intersection point of $\Sigma$).
We can glue $C_1,C_2,...,C_s$ along $s-1$ of its sides (which are edges of $G'$) to obtain a single polygonal disc $P$ that is a fundamental domain for $\Sigma$.
Since $P$ has $s-1$ edges of $G'$ in its interior, the boundary of $P$ has $2(n+1)$ sides (two for each remaining edge of $G'$). 
Let $G$ be the image of $\partial P$ in $\Sigma$. Then $G$ cuts $\Sigma$ into a disc and $G$ crosses $\alpha$ no more than $n+1$ times.

\begin{figure}[h!]
\centerline{\includegraphics[scale=0.8]{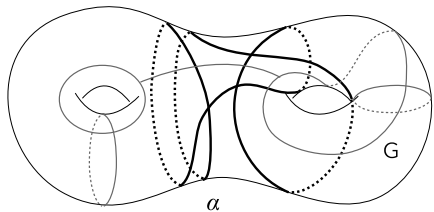}}
\caption{A filling graph $G$ that intersects the curve $\alpha$ minimally.}
\label{graph}
\end{figure}

If $\alpha$ doesn't fill $\Sigma$, for each region $R_k$ that is not a disc or a boundary annulus, take curves $a_{k,i}$ in $R_k$ parallel to the components of $\partial R_k$ and call the sub-surface of $R_k$ bounded by these curves $S_k$.
Now $\alpha$ fills $\Sigma'=\Sigma - \cup S_i$, and by the previous argument there is a graph $G'$ in $\Sigma'$ that cuts $\Sigma - \cup S_i$ into a polygonal disc $P'$ and crosses $\alpha$ at most $s+1$ times.
Observe that there exist arcs $b_{k,j}$ in $S_k$ with endpoints in $G'$ so that $\cup b_{k,i}$ cuts $S_k$ into a disc.
So $G=G' \cup a_{k,i} \cup b_{k,j}$ cuts $\Sigma$ into discs and $G$ crosses $\alpha$ at most $n+1$ times.
\end{proof}

\ 

Example. If an immersed curve $\alpha$ does not disconnect the surface $\Sigma$ then $n(\alpha)=i(\alpha)+1$. 
This is true because each simple curve in the image of $\alpha$ is an essential curve on $\Sigma$ (as there is a curve that intersect it in one point). So a graph $G$ that cuts $\Sigma$ into a disc must cut the image of $\alpha$ into trees, so $G$ must cross at least $i(\alpha)+1$ of the $2i(\alpha)$ arcs of $\alpha$ determined by its $i(\alpha)$ intersection points.

\

Now let's consider the relation between $n(\alpha)$ and $l(\alpha)$. By definition $n(\alpha) \leq l(\alpha)$. 
We claim that if $\Sigma -\alpha$ is connected then $n(\alpha)=l(\alpha)$. To show this, observe that if a graph $G$ cuts $\Sigma$ into a disc and $G$ has two or more vertices then we can connect two of them by a simple arc $e$ in $\Sigma-\alpha$, so $G \cup e$ cuts $\Sigma$ into two discs, and $e$ can be collapsed in $\Sigma$ to obtain a graph $G'$ that has less vertices and cuts $\Sigma$ into two discs, so we can erase one edge of $G'$ to obtain a graph $G''$ with less vertices than $G$ that cuts $\Sigma$ into a disc and $|\alpha \cap G''| \leq |\alpha \cap G|$. 
We can repeat the argument to obtain a graph with only one vertex, or a graph with all its vertices in $\partial \Sigma$, so $l(\alpha) \leq n(\alpha)$.

\

In general $l(\alpha)$ is larger than $n(\alpha)$ and it is not bounded in terms of $n(\alpha)$ or $i(\alpha)$ alone, but depends on the surface $\Sigma$.
We now define another complexity measure for $\alpha$ which will be used to bound $l(\alpha)$.

\bigskip

If $G$ is a graph on a surface $\Sigma$ and $x$ is a point in $\Sigma-G$, define the \textit{peeling} of $G$ from $x$ as the graph $\varphi_xG$ obtained from $G$ by erasing the edges that can be reached from $x$ without crossing $G$, and erasing the vertices of $G$ that lose all their edges.
Observe that if $\Sigma -G$ is connected then $\varphi_xG=\emptyset$ for every $x \in \Sigma -G$ (but the converse is not true). If we apply $\varphi_x$ repeatedly, then the number of components of the complement is reduced each time, so $\varphi^{|\Sigma-G|}_x G = \emptyset$ for every $x \in \Sigma-G$ and there is an $x$ for which $\varphi^{\lceil |\Sigma-G|/2 \rceil}_x G = \emptyset$.

\begin{figure}[h!]
\centerline{\includegraphics[scale=0.6]{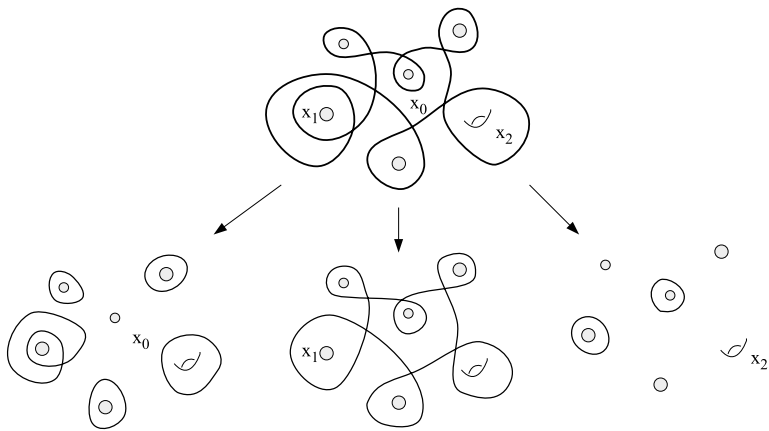}}
\caption{Peeling a curve from three different points.}
\end{figure}

We are interested in the case when $G$ is the image of an immersed curve $\alpha$, the definition above is necessary because applying $\varphi_x$ may transform the image of $\alpha$ into another type of graph.

Let $p_m(\alpha)$ be the minimum number of times that $\varphi_x$ has to be applied to erase all of $\alpha$, when $x$ is placed on the different components of $\Sigma-\alpha$, and, when $\Sigma$ has boundary, let $p_o(\alpha)$ be the minimum when $x$ is placed on the components that meet the boundary.

\begin{lemma} 
\label{pn}
If $\alpha$ is an immersed curve with minimal self-intersections in $\Sigma$ then $p_m(\alpha) \leq p_o(\alpha)  \leq n(\alpha)$, and there is a curve $\alpha'$ freely homotopic to $\alpha$ so that $p_m(\alpha') \leq \frac{1}{2}n(\alpha)+1$.
\end{lemma}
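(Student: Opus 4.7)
The inequality $p_m(\alpha) \leq p_o(\alpha)$ is immediate from the definitions: $p_o$ takes a minimum over a strictly smaller set of admissible base points (those in components of $\Sigma - \alpha$ touching $\partial\Sigma$), so the unrestricted minimum $p_m$ can only be smaller.

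For $p_o(\alpha) \leq n(\alpha)$, I would take a graph $G$ realizing $n = n(\alpha)$, so that $G$ cuts $\Sigma$ into a disc $D$ with $|\alpha \cap G| = n$, arranging that $\partial\Sigma \subseteq G$. Inside $D$, the curve $\alpha$ appears as $n$ properly embedded arcs, which cut $D$ into finitely many regions. Place $x$ just inside a portion of $\partial D$ lying on $\partial\Sigma$, so that the component of $\Sigma - \alpha$ containing $x$ meets $\partial\Sigma$ and $x$ is admissible for $p_o$. The key observation is that one application of $\varphi_x$ erases exactly the edges of $\alpha$ bordering the current $x$-region and merges into it all its neighboring components. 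By induction on $k$, after $k$ peelings every region of $D - \alpha$ within distance $k$ of $x$ (in the adjacency graph whose vertices are regions of $D - \alpha$ and whose edges are shared arcs) has been absorbed. Since only $n$ arcs live in $D$, every region is within distance $n$ of $x$, so $n$ peelings suffice. The non-filling case reduces to this one via the auxiliary curves $a_{k,i}$ and $b_{k,j}$ used in Theorem \ref{ni}: these are disjoint from $\alpha$ and therefore transparent to the peeling operation.

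For the refined bound $p_m(\alpha') \leq \tfrac{1}{2} n(\alpha) + 1$, I would freely homotope $\alpha$ into a curve $\alpha'$ with a more balanced pattern of arcs inside $D$. Concretely, starting from the same fundamental disc $D$, I would slide the endpoints of the $n$ arcs along $\partial D$, so as to arrange them as a parallel-chord or nested-chord pattern in which some point $x^\ast \in D$ sits ``half-way'' through the family. Such sliding realizes a free homotopy of $\alpha$ (possibly introducing new self-intersections) as long as the cyclic pairing of arc endpoints on $\partial D$ is preserved, so that the identification across $\partial D$ still reconstructs a curve in the original free homotopy class. In the balanced configuration, any region of $D - \alpha'$ is separated from the $x^\ast$-region by at most $\lceil n/2 \rceil$ arcs, so $\lceil n/2 \rceil$ peelings from $x^\ast$ absorb every region of $D - \alpha'$; a single extra peeling then eliminates any residual edges produced by the identifications across $G$, giving the stated bound.

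The main obstacle will be to make the endpoint-sliding step rigorous. I must verify that moving an arc endpoint past a vertex of $G$ is a genuine homotopy of $\alpha$ in $\Sigma$, and that the pairing pattern of endpoints on $\partial D$ can be transformed into the balanced configuration without ever breaking $\alpha$ into a different free homotopy class. This amounts to showing that the required moves correspond to fingering homotopies and that no combinatorial obstruction arises from the original arrangement of $\alpha \cap D$.
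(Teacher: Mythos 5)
Your first part follows essentially the paper's route, but the key step is under-justified: the assertion that ``since only $n$ arcs live in $D$, every region is within distance $n$ of $x$'' is false for general configurations of embedded arcs. If two of the arcs are allowed to cross each other many times, the dual distance between regions of $D-\cup a_i$ can be much larger than the number of arcs (a chord together with an arc crossing it $2m$ times creates nested strands and a region at distance roughly $m$ from the rest of the disc). What makes the bound work is precisely the hypothesis that $\alpha$ has minimal self-intersections: then the $n$ arcs are embedded and pairwise cross at most once, so each arc separates $D$ and any two points of $D$ can be joined by a path crossing each arc at most once, hence crossing at most $n$ arcs in total. Since this holds for every basepoint, it gives $p_o(\alpha)\le n(\alpha)$ (and $p_m\le p_o$ is indeed immediate) without any special placement of $x$.

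The second part has a genuine gap, and the target configuration is the wrong one. A ``parallel-chord or nested-chord pattern'' means the $n$ arcs in $D$ are disjoint; but the self-intersections of the glued curve are exactly the crossings of the arcs in the interior of $D$, so a disjoint pattern glues up to an embedded curve and can never represent $\alpha$ when $i(\alpha)>0$. More fundamentally, you cannot prescribe both the order of the endpoints on $\partial D$ and the crossing pattern of the arcs while staying in the free homotopy class: the endpoints are coupled in pairs by the side identifications, and sliding one endpoint past another along $\partial D$ is a homotopy of $\alpha$ in $\Sigma$ only at the cost of making the corresponding strands cross or uncross near the boundary, so after your sliding moves the arc pattern is whatever those moves force, not a pattern of your choosing; ``preserving the cyclic pairing'' is not a sufficient criterion for staying in the class of $\alpha$. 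The paper's proof avoids this issue entirely: it keeps the endpoints fixed (so the free homotopy class is automatic, because arcs in a disc with the same endpoints are homotopic rel endpoints) and replaces the arcs by arcs whose union is a star configuration, cut and pasted at its crossings so as to realize the given endpoint pairing; since peeling depends only on the union of the arcs, and from the centre of a star every point of $D$ is reached by crossing at most $\lceil\frac{n+1}{2}\rceil$ arcs, this yields $p_m(\alpha')\le \tfrac12 n(\alpha)+1$. Finally, your closing ``one extra peeling to eliminate residual edges produced by the identifications'' is not where the $+1$ comes from: peeling in $\Sigma$ is at least as fast as peeling in $D$, so a disc bound transfers directly, and the extra term comes from the geometry of the star itself.
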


\begin{proof}
Let $G$ be a graph on $\Sigma$ that crosses $\alpha$ at $n$ points and cuts $\Sigma$ into a polygonal disc $D$, so
$\alpha$ intersects $D$ in $n$ arcs $a_1,a_2,...,a_n$. Since $\alpha$ has minimal self-intersections, these arcs are embedded and intersect each other at most once, so any pair of points in $D$ can be connected by a path that crosses each $a_i$ at most once. So $\varphi^n_x(\cup a_i)=\emptyset$ for every $x$ in $D-\cup a_i$, and therefore $\varphi^n_x(\alpha)=\emptyset$ for every $x$ in $\Sigma - \alpha$, so the first inequalities hold. 

To prove the second part we will show that the arcs $a_1,a_2,...,a_n$ are homotopic (fixing their endpoints) to arcs $a'_1,a'_2,...,a'_n$ in $D$ such that $\varphi^{\lceil \frac{n+1}{2} \rceil}_x(\cup a'_i)=\emptyset$ for some $x$ in $D-\cup a'_i$, and therefore $\varphi^{\frac{1}{2}n+1}_x(\alpha')=\emptyset$ for $\alpha'=\cup a'_i$ and for that $x$ in $\Sigma-\alpha'$.

\begin{figure}[h!]
\centerline{\includegraphics[scale=0.6]{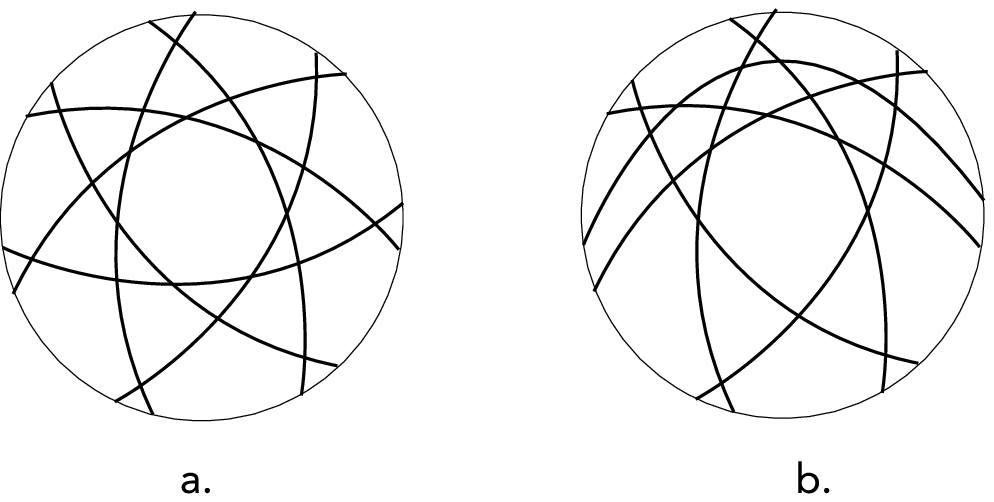}}
\caption{}
\label{arcconf}
\end{figure}

First consider the symmetric configurations of $n$ arcs obtained by rotating one arc (if $n$ is odd) or two arcs (if $n$ is even) so they all meet forming a star as in figure \ref{arcconf}a. If we place the point $x$ in the centre of the star then we can reach every other point in the disc by crossing at most $\lceil \frac{n+1}{2} \rceil$ of the arcs.

Not all `complete' configurations of $n$ arcs that meet each other have a point $x$ like that (see figure \ref{arcconf}b). However, since the endpoints of the arcs in all complete configurations have the same order in $\partial D$, there is a homotopy of the arcs that fixes their endpoints and takes them to a configuration isotopic to a star.
For the more general case of $n$ arcs that may not all meet each other, observe that a star configuration can be cut and pasted to get a configuration where the order of the endpoints of the arcs in $\partial D$ is prescribed arbitrarily. So any configuration of $n$ arcs is homotopic to a configuration obtained by cutting and pasting a star configuration, and the result follows because cutting and pasting does not increase the number of steps needed for peeling.
\end{proof}

\begin{theorem}{} 
\label{lp}
If $\alpha$ is a curve on a compact orientable surface $\Sigma$, then:
\begin{enumerate} 
\item $l(\alpha) \leq 2n(\alpha)-2$, if $\Sigma$ is planar and $\alpha$ is not simple.
\item $l(\alpha)\leq 2p_m(\alpha)(2-\chi(\Sigma))$, if $\Sigma$ is closed.
\item $l(\alpha)\leq 2p_o(\alpha)(1-\chi(\Sigma))$, if $\Sigma$ has boundary.
\end{enumerate}
\end{theorem}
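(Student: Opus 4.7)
\emph{Plan.} The three parts require different strategies.

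\textbf{Parts (2) and (3)} share a common idea: use the peeling to bound the length of the generators of a standard presentation. Let $p$ denote $p_m(\alpha)$ in (2) or $p_o(\alpha)$ in (3), and fix a point $x_0$ achieving the peeling (in the interior of $\Sigma$ for (2), in a boundary-adjacent region for (3)). The peeling yields a nested sequence $V_0\subset V_1\subset\cdots\subset V_p=\Sigma$, where $V_i$ is the closure of the region containing $x_0$ after $i$ peelings. Since each layer boundary $\partial V_j$ (for $j<p$) consists of arcs of $\alpha$, any loop or arc based at $x_0$ that reaches at most depth $i$ can be arranged to cross $\alpha$ at most $2i$ times---one crossing outward and one inward through each $\partial V_j$ with $j<i$. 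The plan is to construct a standard graph $G$ whose edges are chosen layerwise: at each peel step $i$, one adds the generators (loops for (2), arcs for (3)) needed to account for the new topology of $V_i$, placing each of them so that it lives inside $V_i$. Summing, $|\alpha\cap G|\le\sum_i 2i\cdot c_i\le 2p\sum_i c_i=2p(2-\chi(\Sigma))$ for (2) (and analogously $2p_o(1-\chi(\Sigma))$ for (3)), where $c_i$ counts generators added at step $i$ and $\sum c_i$ equals $2-\chi(\Sigma)$ (resp.\ $1-\chi(\Sigma)$).

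\textbf{Part (1)} (planar $\Sigma$, non-simple $\alpha$) admits a direct argument. Begin with a graph $G$ on $\Sigma$ realising $n(\alpha)=n$ crossings. A standard presentation for a planar surface uses disjoint arcs with endpoints on $\partial\Sigma$, so one converts $G$ into this form by iteratively sliding interior vertices to the boundary and removing redundant edges. For a generic $G$ the slides could roughly double the crossings with $\alpha$, but the non-simplicity of $\alpha$ provides a self-intersection whose local configuration lets one step of the conversion save exactly two crossings, giving $l(\alpha)\le 2n-2$.

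\textbf{Main obstacle.} The delicate step in (2) and (3) is producing a handle decomposition of $\Sigma$ compatible with the peeling, so that the generators constructed layerwise actually assemble into a standard graph (one whose complement is a single disc). This forces the cores of the $1$-handles to live in the newest layer $V_i$ at the step they are attached and to form an embedded system that is disjoint except at $x_0$ (or on $\partial\Sigma$), and an inductive argument is needed to keep the standardness through each peel step. For (1), the challenge is to locate the self-intersection of $\alpha$ at a step of the vertex-sliding where two crossings can be eliminated, without inadvertently introducing new ones.
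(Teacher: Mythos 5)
Your outline for (2) and (3) has the right arithmetic (a standard system has $2-\chi(\Sigma)$ loops, resp.\ $1-\chi(\Sigma)$ arcs, and each should cross $\alpha$ at most $2p$ times), but the entire content of the theorem is the step you defer to the ``main obstacle'': actually producing a system of loops through one vertex $x_0$ (resp.\ disjoint arcs to $\partial\Sigma$) whose complement is a \emph{single} disc and each of which respects the crossing budget. Your layerwise handle-attachment scheme is not shown to yield such a system: the layer boundaries $\partial V_i$ are singular subcomplexes of $\alpha$ (they pass through the double points), the ``new topology'' appearing at step $i$ need not be representable by generators that stay inside $V_i$, remain embedded, and meet the previously chosen generators only at $x_0$, and standardness of the union (complement one disc) is a global condition that you merely assert can be preserved inductively. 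The paper resolves precisely this point by a different device: it glues the complementary regions of $\alpha$, ordered by their peeling depth, into one polygonal fundamental domain $P$, chooses simple paths $c_j$ from $x$ to the midpoints of the sides of $P$, each crossing $\alpha$ at most $p$ times and pairwise meeting only at $x$, and then passes to the universal cover, where the preimage of $\bigcup c_j$ cuts $\widetilde\Sigma$ into cells on which the deck group acts freely; a finite union of these cells is a fundamental region $F$ all of whose vertices project to $x$, i.e.\ a one-vertex standard system whose $2-\chi(\Sigma)$ edges are each a union of two $c_j$'s and hence cross the preimages of $\alpha$ at most $2p_m(\alpha)$ times (for (3) one takes $x$ adjacent to $\partial\Sigma$ and pushes the $c_j$'s to disjoint arcs ending on the boundary). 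Without this step, or a genuine substitute for it, your estimate $\sum_i 2i\,c_i\le 2p\sum_i c_i$ counts crossings with a graph you have not constructed.

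Part (1) as proposed is also not a proof, and its key mechanism is doubtful. The natural conversion (the one the paper uses) replaces the trees constituting the cutting graph by the boundary arcs of a regular neighbourhood, which a priori doubles the crossing number to $2n$; one saves two crossings when some boundary arc meets $\alpha$ twice, or when at least two trees are crossed by $\alpha$, but in the remaining case (only one tree crossed, and every boundary arc meeting $\alpha$ at most once) there is no local move that ``saves exactly two crossings.'' The paper handles that case by a different construction: it removes the edges of the tree crossed by $\alpha$, takes neighbourhoods of the resulting subtrees together with one boundary arc through each removed edge, and obtains an arc system cutting $\Sigma$ into discs with at most $n$ crossings, so that $l(\alpha)\le n(\alpha)\le 2n(\alpha)-2$; non-simplicity of $\alpha$ enters only to guarantee $n(\alpha)\ge 2$ in this last inequality, not to provide a local configuration for saving crossings during a vertex slide. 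You would need to either carry out an argument of this kind or justify concretely how a self-intersection of $\alpha$ interacts with your sliding procedure, which at present is unsupported.
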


\begin{proof}
1. Take a graph on $\Sigma$ that cuts the surface into a disc and crosses $\alpha$ at $n(\alpha)$ points. Since $\Sigma$ is planar the graph consists of one or more trees $T_1,T_2,...,T_k$. The boundary of a regular neighbourhood of $\cup T_i$ is a collection of properly embedded arcs $a_1,a_2,...$ that connect the boundary curves of $\Sigma$ and cut it into discs, so removing any $a_i$ from each $T_i$ leaves a subcollection of arcs that cut $\Sigma$ into a disc and cross $\alpha$ at most $2n$ times.
If some $a_i$ crosses $\alpha$ twice, we can remove that $a_i$ to obtain $l(\alpha) \leq 2n(\alpha)-2$. The same is true if at least two $T_i$'s are crossed by $\alpha$, since we can remove one $a_i$ that crosses $\alpha$ from each component.
We are left with the case when only one $T_i$ crosses $\alpha$ and each $a_i$ crosses $\alpha$ at most once.
Let $e_i,e_2,...e_n$ be the edges of $T_i$ that cross $\alpha$, and $T_{i1},T_{i2},...T_{in+1}$ be the subtrees obtained from $T_i$ by removing the edges $e_i,...,e_n$.
For each $T_{ij}$ consider a regular neighbourhood $N(T_{ij})$ contained in $N(T_i)$.
Take the collection formed by the arcs of $\partial N(T_j)$ for $j \neq i$, the arcs of $\partial N(T_{ij})$ for all the $T_{ij}$'s, and for each edge $e_i$, one arc $a_i$ of $\partial N(T)$ containing $e_i$.
Since this collection of properly embedded arcs cuts $\Sigma$ into discs and intersects $\alpha$ at most $n$ times, in this case $l(\alpha) \leq n(\alpha)$.

\

2. Take a point $x \in \Sigma - \alpha$ for which $\varphi_x^{p_m(\alpha)}\alpha=\emptyset$. Then $\alpha$ cuts $\Sigma$ into a finite collection of compact regions $R_0, R_1, R_2, ...$ where:
\begin{itemize}
    \item $R_0$ is the region that contains $x$.
    \item $R_{1},...,R_{i_1}$ are the regions adjacent (along arcs) to $R_0$ 
    \item $R_{i_1+1},...,R_{i_2}$ are the regions adjacent to $R_{1},...,R_{i_1}$.
    \item $R_{i_j+1},...,R_{i_{j+1}}$ are the regions adjacent to $R_{i_{j-1}+1},...,R_{i_j}$.
\end{itemize}
 
Each $R_i$ is a disc or can be cut along arcs that end at the self-intersection points of $\alpha$ to get a polygonal disc $P_i$. 
Now glue the $P_i$'s to get a single polygonal disc $P$ as follows:
Start with $P_0$, and for each $i$ with $i_j < i \leq i_{j+1}$ choose an arc $a_i$ of $P_i \cap P_{i'}$ for some $i' \leq i_{j}$ and attach $P_i$ to $P_{i'}$ along $a_i$.
$P$ is a fundamental region for $\Sigma$ and the universal cover $\tilde{\Sigma}$ is tessellated by the preimages $P_i$ of $P$.

By construction, for each side $s_j$ of $P$ there is a path $c_j$ in $P$ going from $x$ to the middle point of $s_j$ crossing $\alpha$ at most $p(\alpha)$ times. We can assume that the paths $c_1, ..., c_n$ are simple and intersect only at $x$, because if $c_i$ and $c_j$ intersect at another point $y$ we can switch their subpaths from $x$ to $y$ to get paths with less intersections.

Since $\cup c_j$ cuts $P$ into quadrilaterals, the preimage of $\cup c_j$ cuts the universal covering $\tilde{\Sigma}$ into discs $\Delta_k$ made of quadrilaterals surrounding a single vertex of the $P_i$'s, and the vertices of the $\Delta_k$'s are translates of $x$.
The deck transformations map $\Delta_k$'s to $\Delta_k$'s; and only the identity maps $\Delta_k$ to itself. 
Therefore some finite union of adjacent $\Delta_k$'s forms a fundamental region $F$ for $\Sigma$. As all the vertices of $F$ project to the same point in $\Sigma$, $F$ has $2(2-\chi(\Sigma))$ sides.

The length of $\alpha$ is one half of the number of intersections of $\partial{F}$ with the preimages of $\alpha$. Each side of $F$ is the union of two $c_j$'s, so it intersects the preimages of $\alpha$ at most $2p_m(\alpha)$ times, and $F$ has $2(2-\chi(\Sigma))$ sides, so $l(\alpha) \leq 2p_m(\alpha)(2-\chi(\Sigma))$.

3. The proof for surfaces with boundary is similar to the proof for closed surfaces, except that we need to start with a point $x$ on the boundary of $\Sigma$, so that at the end we can push all the $c_j$'s to disjoint arcs that end in $\partial \Sigma$. Thus the bound that we get is $l(\alpha) \leq 2p_o(\alpha)(1-\chi(\Sigma))$.
\end{proof}

The following examples show that the bounds given in theorem \ref{lp} are optimal: 

\medskip

Example. Take a curve $\alpha$ in an $n$-punctured sphere that zig-zags around the punctures as in fig \ref{zigzag}.
Then $l(\alpha)= 2 n(\alpha)-2$.

\begin{figure}[h!]
\centerline{\includegraphics[scale=0.7]{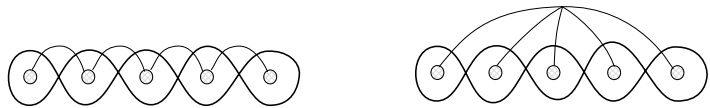}}
\caption{A planar curve $\alpha$ with $l(\alpha)= 2 n(\alpha)-2$}
\label{zigzag}
\end{figure}

Example. Consider a surface $\Sigma$ of genus $g$ with a single boundary component, and take $\alpha$ to be the n-th power of a loop homotopic to $\partial \Sigma$. If $\Sigma$ is cut along properly embedded arcs to get a disc, then each of the $2g$ arcs meets the boundary in two points, so it must cross $\alpha$ at least $2n$ times and so $l(\alpha)=2n \cdot 2g=2p_o(\alpha)(1-\chi(\Sigma))$.

\medskip

Example.
If $\Sigma$ is a closed surface of even genus $g$ and $\alpha$ is the n-th power of a simple loop $c$ that splits $\Sigma$ into two surfaces of genus $g/2$ (as in figure \ref{toro}) then $l(\alpha) = 2n \cdot 2 (g/2) = 
n \cdot 2g = 2p_m(\alpha)(2-\chi(\Sigma))$

\begin{figure}[h!]
\centerline{\includegraphics[scale=0.4]{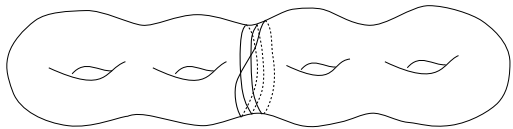}}
\caption{A curve $\alpha$ with $l(\alpha)=(i(\alpha)+1)(2-\chi(\Sigma))=n(\alpha) \cdot 2g(\Sigma)$}
\label{toro}
\end{figure}

\

If $\alpha$ is an immersed curve on $\Sigma$, define $m(\alpha)$ to be the minimum number of intersections between $\alpha$ and an essential curve on $\Sigma$. This minimum corresponds to the intersections of $\alpha$ with some simple curve, and is zero if $\alpha$ does not cut $\Sigma$ into discs. It is easy to see that $m(\alpha) \leq n(\alpha)$ and that $m(\alpha) \leq 2p_m(\alpha)$. 
One might expect $m(\alpha)$ to be much smaller than $n(\alpha)$, but the examples below show that $m(\alpha)$ can be as large as $\frac{1}{2} n(\alpha)$.
We now want to bound $m(\alpha)$ in terms of $i(\alpha)$.

\begin{lemma}{} 
\label{mn}
If $\alpha$ is a curve on a surface $\Sigma$, then $m(\alpha) \leq 2\sqrt{i(\alpha)}+1$.
\end{lemma}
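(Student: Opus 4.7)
I plan to combine the peeling construction with an edge-counting pigeonhole to exhibit an essential simple closed curve on $\Sigma$ that crosses $\alpha$ at most $2\sqrt{i(\alpha)}+1$ times.

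By the lemma at the start of the section I replace $\alpha$ by a representative of its free homotopy class realising $i=i(\alpha)$ self-intersections, and view $\alpha$ as a $4$-valent graph on $\Sigma$ with $i$ vertices and $2i$ edges. The case where $\alpha$ does not fill $\Sigma$ is reduced to the filling case by capping off each complementary non-disk component as in the proof of Theorem~\ref{ni}. For $x\in\Sigma\setminus\alpha$ let $R_{\leq k}$ be the union of the components of $\Sigma\setminus\alpha$ at combinatorial distance at most $k$ from the component containing $x$, put $b_k=|\partial R_{\leq k}\cap\alpha|$, and let $p=p(x)$ be the peeling depth from $x$.

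The crucial observation is that the $(k{+}1)$-st peeling step from $x$ erases exactly the edges lying on $\partial R_{\leq k}$, and each edge of $\alpha$ is erased at exactly one step, so
\[
\sum_{k=0}^{p-1} b_k \;=\; 2\,i(\alpha).
\]
Pigeonhole therefore supplies a level $k_0$ with $b_{k_0}\leq 2\,i(\alpha)/p$. I trade this against the bound $m(\alpha)\leq 2\,p_m(\alpha)$ recorded above the statement of the lemma. If $p_m(\alpha)\leq\sqrt{i(\alpha)}$ then $m(\alpha)\leq 2\sqrt{i(\alpha)}$ and we are done; otherwise $p(x)>\sqrt{i(\alpha)}$ for every $x\in\Sigma\setminus\alpha$, and the identity above produces a level $k_0$ with $b_{k_0}<2\sqrt{i(\alpha)}$.

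In the second regime it remains to show that for a suitable $x$ the sub-surface $R_{\leq k_0}$ can be chosen so that $\partial R_{\leq k_0}$ contains a component essential in $\Sigma$; that component is then a simple closed curve crossing $\alpha$ at most $b_{k_0}\leq 2\sqrt{i(\alpha)}+1$ times. This is the main obstacle: the pigeonhole level need not automatically lie in the non-disk portion of the filtration $\{R_{\leq k}\}$. I will handle it by choosing $x$ so that the first level $k^{*}$ at which $R_{\leq k^{*}}$ stops being a disk occurs while $b_{k^{*}}$ is still at most $2\sqrt{i(\alpha)}+1$, using the freedom in $x$ together with a case analysis according to whether the non-trivial topology of $R_{\leq k^{*}}$ arises from a handle of $\Sigma$, from a boundary component of $\Sigma$, or from the merging of previously disjoint boundary loops of $R_{\leq k^{*}-1}$. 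The $+1$ in the stated bound absorbs the possible need to advance one peeling step past the pigeonhole minimiser to reach a topologically non-trivial level.
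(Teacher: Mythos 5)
There is a genuine gap: the step that actually produces an essential simple closed curve with few crossings is missing, and it is precisely where the content of the lemma lies. Your pigeonhole argument (each edge of $\alpha$ is erased at exactly one peeling step, so $\sum_k b_k \le 2i(\alpha)$ and some level is cheap) is fine in spirit, but it only gives you a level $k_0$ whose frontier contains few \emph{edges of $\alpha$}. From there three things are unproven. First, you give no argument that for some choice of $x$ a cheap level has a boundary component that is essential in $\Sigma$; the sentence ``I will handle it by choosing $x$ so that the first non-disk level occurs while $b_{k^*}$ is still small, using a case analysis'' is a restatement of the required claim, not a proof, and the pigeonhole level can perfectly well lie in the disk regime while every non-disk level is expensive. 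Second, the claim that the ``$+1$ absorbs the possible need to advance one peeling step'' is false as stated: passing from level $k_0$ to level $k_0+1$ changes the frontier by $b_{k_0+1}$, which the pigeonhole does not control at all. Third, even granting an essential boundary component, that component is a subcomplex of $\alpha$ itself; the curve you must exhibit is a pushed-off parallel copy, and its transverse intersections with $\alpha$ are counted at the vertices of $\alpha$ along the component (where the other two local branches of $\alpha$ emanate), not by the edge count $b_{k_0}$, so the inequality ``crossing $\alpha$ at most $b_{k_0}$ times'' needs a separate comparison that you never make. (Two smaller points: the identity ``step $k+1$ erases exactly the edges on $\partial R_{\le k}$'' is not exact, since peeling also erases edges with the swallowed region on both sides and vertex deletions can merge regions, though the weaker fact that each edge is erased once is all you need; and the non-filling case needs no capping reduction, since then $m(\alpha)=0$.)

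For comparison, the paper avoids having to construct the essential curve: it assumes every essential curve meets $\alpha$ at least $m$ times and derives a lower bound on the number of complementary regions. Working in the universal cover, it takes a region $R$ crossed by a lift of a curve realizing $m$ crossings, shows that the regions at combinatorial distance $t\le\frac{m-1}{2}$ from $R$ project injectively to $\Sigma-\alpha$ and number at least $2t$ in each distance annulus (otherwise a subarc of the minimal curve could be rerouted through fewer regions, contradicting minimality), so $|\Sigma-\alpha|\ge\frac14(m-1)^2+2$; combined with $|\Sigma-\alpha|\le i(\alpha)+2$ this gives $m\le 2\sqrt{i(\alpha)}+1$. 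Your approach counts edges through a peeling filtration instead of counting regions along a minimizing curve; if you want to pursue it, the missing ingredient is an argument converting a cheap frontier level into an essential transverse curve with a comparable crossing number, and that is essentially as hard as the counting argument the paper carries out.
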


\begin{proof}{}
We can assume that $\alpha$ cuts  $\Sigma$ into a collection of discs, since otherwise $m(\alpha)=0$.
We will show that if all the essential curves in $\Sigma$ intersect $\alpha$ at least $m$ times then $\Sigma - \alpha$ has at least $\frac{1}{4}(m-1)^2+2$ connected components, therefore
$i(\alpha)+2 \geq |\Sigma-\alpha| \geq \frac{1}{4}(m-1)^2+2$ and so $2\sqrt{i(\alpha)}+1 \geq m$.

The universal covering $\widetilde \Sigma$ is tessellated by the disc regions cut by the lines above $\alpha$.
Define the distance between two regions in $\widetilde \Sigma$ as the number of times that one has to cross the lines above $\alpha$ to go from one region to the other.
Since any essential curve on $\Sigma$ crosses $\alpha$ at least $m$ times, the regions at distance at most $\frac{m-1}{2}$ from $R$ project to different regions in $\Sigma$. 
The regions at distance $t$ from a region $R$ touch each other only along vertices, and for $t>1$,
the union of the regions at distance $t$ and $t-1$ from $R$ is an annulus. We now count the number of regions in these annuli when $R$ is crossed by a line $l$ that projects to a curve $c$ in $\Sigma$ that crosses $\alpha$ $m$ times.

The line $l$ crosses two regions $R'$ and $R''$ at distance $t$ from $R$. $R'$ and $R''$ are connected by two chains of regions that go around $R$ alternating between distance $t$ and $t-1$ from $R$. If $t \leq \frac{m-1}{2}$, each of these chains must contain at least $2t+1$ regions, as otherwise we could replace an arc of $c$ by a homotopic arc that crosses less regions, and obtain a curve that crosses $\alpha$ less than $m$ times. Since the two chains have only $R'$ and $R''$ in common, together they contain at least $4t$ distinct regions, and since they alternate between distance $t$ and distance $t-1$ from $R$, the number of regions at distance $t$ from $R$ is at least $2t$. 
Now the function $r(t)$ that counts the number of regions at distance at most $t$ from $R$ satisfies 
$r(t)-r(t-1) \geq 2t$ and $r(1) \geq 3$ so $r(t) \geq t^2+2$ for $t \leq \frac{m-1}{2}$.
So, if $m$ is odd, $r(\frac{m-1}{2}) \geq (\frac{m-1}{2})^2+2$ and we are done.

\begin{figure}[h!]
\centerline{\includegraphics[scale=0.6]{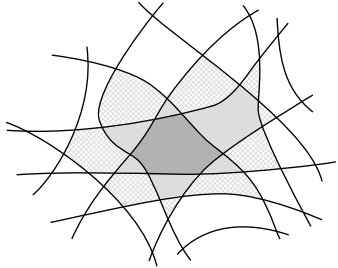}}
\caption{Regions at distance 1 and 2 from R.}
\end{figure}

If $m$ is even, the regions at distance $t=m/2$ from $R$ may not project to different regions of $\Sigma-\alpha$, but we claim that at least $m/2$ of those regions do so, and therefore $|\Sigma-\alpha | \geq r(\frac{m-2}{2})+\frac{m}{2} = (\frac{m-2}{2})^2+2 +\frac{m}{2} > (\frac{m-1}{2})^2 +2$.
To prove the claim, observe that the two regions crossed by $l$ at distance $m/2$ from $R$ project to the same region of $\Sigma-\alpha$, and they are joined by 2 chains of regions at alternating distances $m/2$ and $m/2-1$ from $R$. 
Since no essential curve on $\Sigma$ intersects $\alpha$ less than $m$ times,  these chains have length at least $m+1$, and the regions of any subchain of length $n$ must project to different regions of $\Sigma - \alpha$, and half of the regions are at distance $m/2$ from $R$.
\end{proof}

\medskip

Example. 
Figure \ref{myi}a. shows a filling curve $\alpha$ with $m(\alpha)=1$ and $i(\alpha)=3$, so $n(\alpha)=4$.
Figure \ref{myi}b. shows a curve $\gamma$ with $m(\gamma)=2$ and $i(\gamma)=3$, so $n(\gamma)\leq 4$. Both $\alpha$ and $\gamma$ have minimal self-intersections.
The powers of $\gamma$ satisfy $m(\gamma^n)=2n$, $n(\gamma) \leq 4n$ and $i(\gamma^n)=3n^2+n-1$ so
$m(\gamma^n) \geq \frac{1}{2} n(\gamma^n)$
and $m(\gamma^n) \sim \frac{\sqrt{3}}{2} \sqrt{i(\gamma^n)}$.

\begin{figure}[h!]
\centerline{\includegraphics[scale=0.8]{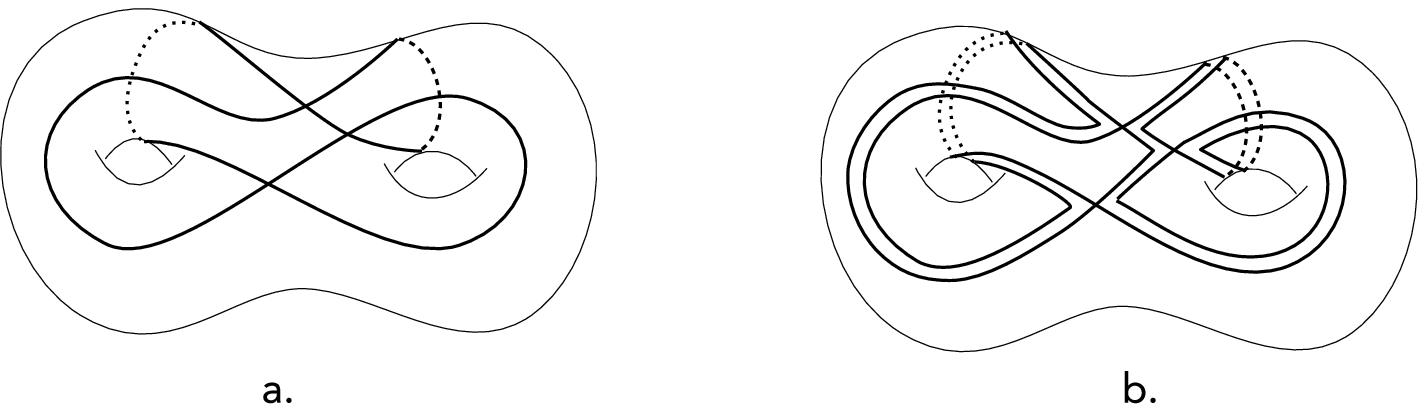}}
\caption{}
\label{myi}
\end{figure}

\section{Bounds regarding covering spaces}

Now we want to find estimates for the minimum degree $d(\alpha)$ of a covering to which a curve $\alpha$ lifts as an embedded curve, and the minimum degree $r(\alpha)$ to which $\alpha$ does not lift as a closed curve.

\begin{lemma} 
\label{n}
If a curve $\alpha$ on a surface $\Sigma$ lifts to an embedding on a covering $\widetilde \Sigma$
and $\alpha$ is freely homotopic to a curve $\alpha'$ by a homotopy that does not add intersections, then $\alpha'$ also lifts to an embedding on $\widetilde \Sigma$.
\end{lemma}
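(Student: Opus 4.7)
The plan is to use the covering homotopy property together with a tracking argument for self-intersections. First I would lift $H: S^1\times[0,1]\to\Sigma$ uniquely to $\widetilde H: S^1\times[0,1]\to\widetilde\Sigma$ with $\widetilde H(\cdot,0)=\widetilde\alpha$, and set $\widetilde{\alpha'}:=\widetilde H(\cdot,1)$. This is automatically a closed curve lifting $\alpha'$, so the task reduces to showing that it has no self-intersections.

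I would then argue by contradiction: suppose $\widetilde{\alpha'}(s_1)=\widetilde{\alpha'}(s_2)$ for some distinct $s_1,s_2\in S^1$. Consider the self-intersection locus of the homotopy,
\[
X \;=\; \bigl\{(x,y,t)\in(S^1\times S^1\setminus\Delta)\times[0,1]\ :\ H(x,t)=H(y,t)\bigr\}.
\]
After perturbing $H$ to be generic, the hypothesis that $H$ does not add intersections will imply that every path-component of $X$ meeting $\{t=1\}$ also meets $\{t=0\}$. Assuming this, I can choose a continuous path $(x(t),y(t))$ inside $X$ with $(x(1),y(1))=(s_1,s_2)$ and $(x(0),y(0))=(x_0,y_0)$ for some $x_0\ne y_0$, and set $\beta(t):=H(x(t),t)=H(y(t),t)$. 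Then $\widetilde H(x(t),t)$ and $\widetilde H(y(t),t)$ are two continuous lifts of $\beta$; they agree at $t=1$ by our assumption, hence by uniqueness of path lifting they agree throughout. Evaluating at $t=0$ yields $\widetilde\alpha(x_0)=\widetilde\alpha(y_0)$ with $x_0\ne y_0$, contradicting the embeddedness of $\widetilde\alpha$.

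The main obstacle is the tracking step itself: making precise that "no added intersections" forces every component of $X$ passing through $\{t=1\}$ to reach back to $\{t=0\}$. In a generic homotopy, $X$ is a $1$-manifold away from finitely many Reidemeister-type events, and the hypothesis rules out precisely the "create" events (Reidemeister~I or II in the direction that raises the intersection count), which are the only mechanisms by which a new component of $X$ can be born at some $t_0>0$. All other events (R-I and R-II destructions, R-III swaps) only terminate or continue existing components, so no component can have all its endpoints strictly inside $(0,1]$. The remaining ingredients---covering homotopy lifting and uniqueness of path lifts---are completely standard.
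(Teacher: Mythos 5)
Your proposal is correct and follows essentially the same route as the paper: lift the free homotopy to $\widetilde\Sigma$ and trace any would-be self-intersection of the lifted $\alpha'$ backward through the homotopy (possible precisely because the homotopy creates no intersections) to a self-intersection of $\widetilde\alpha$, contradicting embeddedness. Your double-point-locus and unique-path-lifting bookkeeping is just a more explicit rendering of the paper's phrasing, in which the subloop of $\alpha'$ at the intersection is freely homotopic to a subloop of $\alpha$ and lifting to a closed loop is a free-homotopy invariant.
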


\begin{proof}
The free homotopy from $\alpha$ to $\alpha'$ lifts to a free homotopy from the embedded curve $\widetilde \alpha$ to a curve $\widetilde \alpha'$ in $\widetilde \Sigma$. If $\widetilde \alpha'$ is not an embedding then there is a subloop $\alpha'_1$ of $\alpha'$ that lifts to a subloop of $\widetilde \alpha'$. As the homotopy from $\alpha'$ to $\alpha$ does not remove any intersections, $\alpha'_1$ is freely homotopic to a subloop $\alpha_1$ of $\alpha$. As $\alpha'_1$ lifts to a closed loop in $\widetilde \Sigma$ then $\alpha_1$ also lifts to a closed loop, so $\widetilde \alpha$ is not an embedding, a contradiction.
\end{proof}

\

The following result from \cite{S} shows that if $\alpha$ is a curve with minimal self-intersections on a surface with boundary then $d(\alpha) \leq l(\alpha)$.

\begin{lemma}
\label{s}
Let $\Sigma$ be a surface with boundary and let $\mu_1,\mu_2,...,\mu_k$ be disjoint embedded arcs that cut $\Sigma$ into a disc $D$. If $\alpha$ is a curve or an arc with minimal self-intersections in $\Sigma$, then there is a covering of $\Sigma$ of degree at most $|\alpha \cap \cup \mu_i|$ (if $\alpha$ is a curve) or at most $|\alpha \cap \cup \mu_i| + 1$ (if $\alpha$ is an arc) to which $\alpha$ lifts as an embedding. 
\end{lemma}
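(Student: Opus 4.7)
The plan is to build the covering by hand: take $n = |\alpha \cap \cup \mu_i|$ copies of the polygonal disc $D$ obtained by cutting $\Sigma$ along $\cup \mu_i$, put one arc of $\alpha \cap D$ in each copy, and glue the boundary copies of each $\mu_j$ in such a way that consecutive arcs of $\alpha$ cross over into the correct next sheet. This is essentially the strategy Scott uses, and it reduces to choosing one gluing permutation per $\mu_j$ and checking that the constraints coming from $\alpha$ are mutually consistent.

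To set things up, label the crossings of $\alpha$ with $\cup \mu_i$ as $c_1,\ldots,c_n$ in the order they are encountered along $\alpha$ (cyclically if $\alpha$ is a curve, linearly if $\alpha$ is an arc), and let $a_i$ be the component of $\alpha \cap D$ running from $c_i$ to $c_{i+1}$; in the arc case one includes an initial segment $a_0$ and a terminal segment $a_n$, each having one endpoint on $\partial\Sigma$. Because $\alpha$ has minimal self-intersections, each $a_i$ is embedded in $D$. Take disjoint copies $D_1,\ldots,D_n$ (or $D_0,\ldots,D_n$ in the arc case) and place $a_i$ inside $D_i$; each $\mu_j$ appears twice in $\partial D$, giving boundary arcs $\mu_j^+$ and $\mu_j^-$ inside every $D_i$.

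For each $j$ I then choose a permutation $\pi_j$ of the index set and glue $\mu_j^+ \subset \partial D_i$ to $\mu_j^- \subset \partial D_{\pi_j(i)}$ via the very identification that glues $D$ back into $\Sigma$. The only required constraint is that whenever a crossing $c_{i+1}$ lies on $\mu_j$ (so the endpoint of $a_i$ on $\mu_j^+$ and the starting point of $a_{i+1}$ on $\mu_j^-$ occupy matching positions) one has $\pi_j(i) = i+1$. Each index $i$ appears in at most one such constraint, namely the one coming from $c_{i+1}$, and each target $i+1$ similarly in at most one; so this partial assignment is a bijection between two subsets of the index set of equal size and extends to a full permutation. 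Gluing the $D_i$ by the resulting $\pi_j$ yields a covering $\widetilde{\Sigma} \to \Sigma$ of the required degree, since each local gluing is a copy of the ones that reassemble $D$ into $\Sigma$.

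Finally, the lift $\widetilde{\alpha}$ consists of the embedded arcs $\widetilde{a}_i \subset D_i$ joined end-to-end across the chosen boundary edges, and by construction forms an embedded closed curve (or an embedded arc). The point requiring care is to verify that the partial prescriptions for the various $\pi_j$ never conflict and that self-intersections of $\alpha$ do not reappear in the lift; both follow from the observation that the self-intersections of $\alpha$ occur away from $\cup\mu_i$, hence in the interior of $D$, and therefore involve two \emph{different} segments $a_i$ and $a_{i'}$, which the construction separates into disjoint sheets $D_i$ and $D_{i'}$. The arc case needs only one additional copy of $D$ to house the initial segment $a_0$, yielding the bound $|\alpha \cap \cup \mu_i|+1$.
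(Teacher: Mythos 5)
Your overall plan (one copy of $D$ per segment of $\alpha \cap D$, one gluing permutation $\pi_j$ per arc $\mu_j$, complete the partial matching arbitrarily) is the same as the paper's, but your consistency check has a genuine gap: it ignores the direction in which $\alpha$ crosses each $\mu_j$. A crossing $c_{i+1}$ on $\mu_j$ imposes $\pi_j(i)=i+1$ only when $a_i$ leaves $D$ through $\mu_j^+$ and $a_{i+1}$ re-enters through $\mu_j^-$; if the crossing goes the other way, the constraint is $\pi_j(i+1)=i$. Once both kinds of crossings occur on the same $\mu_j$, the claim that each index appears in at most one constraint is false. Concretely, if some segment $a_m$ has both endpoints on the same boundary copy, say $\mu_j^+$ (this happens whenever $\alpha$ crosses $\mu_j$ twice in succession with opposite signs, e.g.\ a curve with zero algebraic but nonzero geometric intersection with $\mu_j$), then continuity of the lift at $c_m$ forces the arc $\mu_j^+ \subset \partial D_m$ to be glued to $\mu_j^- \subset \partial D_{m-1}$, while continuity at $c_{m+1}$ forces the very same arc $\mu_j^+ \subset \partial D_m$ to be glued to $\mu_j^- \subset \partial D_{m+1}$. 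A boundary arc can be glued only once, so the ``always move to a new sheet'' scheme cannot be completed; and if instead you let the lift return to a previously used sheet, two segments end up in one copy of $D$ and your one-segment-per-sheet embeddedness argument no longer applies (those two segments may intersect in $D$).

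This is precisely the case the paper disposes of before building the covering: if some segment of $\alpha \cap D$ starts and ends at the same $\mu_i^{\pm}$, the curve is first homotoped by pushing that segment across $\mu_i$, which strictly decreases $|\alpha \cap \bigcup \mu_i|$ without creating self-intersections, and Lemma \ref{n} is then used to transfer the conclusion (lifting as an embedding, with an even smaller degree) back to the original curve. After this reduction every segment enters and leaves $D$ through distinct boundary arcs, each boundary arc of each sheet carries at most one constraint, and your permutation argument goes through. As written, however, the step ``the partial assignment is a bijection and extends to a full permutation'' fails in general, so your proposal needs this preliminary reduction (or an equivalent device) to be correct.
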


\begin{proof}{} If $\alpha$ is a curve then it crosses $D$ along $|\alpha \cap \cup \mu_i|$ subarcs, and if $\alpha$ is an arc then it crosses $D$ along $|\alpha \cap \cup \mu_i|+1$ subarcs.
As $\alpha$ has minimal self-intersections, these subarcs $\alpha_1, \alpha_2,...$ are simple and intersect each other in at most 1 point. 
Let $\mu_i^+$ and $\mu_i^-$ be the two arcs of $\partial D$ that are identified into each $\mu_i$. Following $\alpha$,
let $\mu_{i_1}^{\sigma_1},\mu_{i_2}^{\sigma_2},...,\mu_{i_l}^{\sigma_l}$ be the signed sequence of intersections with the $\mu_i$'s, where $\sigma_j=+1$ if $\alpha_j$ ends at $\mu_{i_j}^+$ and $\sigma_i=-1$ if $\alpha_j$ ends at $\mu_{i_j}^-$.

If some $\alpha_j$ starts and ends at the same $\mu_i^{\pm}$ then there is a homotopy that pushes $\alpha_i$ into $\mu_i^{\pm}$ without increasing the intersections with other subarcs, so pushing it further across $\mu_i^{\pm}$ gives a homotopy of $\alpha$ that reduces its intersections with the $\mu_i$'s without adding self-intersections.
By Lemma \ref{n}, if the modified $\alpha$ lifts as an embedding in some covering of $\Sigma$ then the original $\alpha$ also lifts as an embedding.

To construct the covering $\widetilde \Sigma$, glue copies $D_1,D_2,...$ of the disc $D$, following $\alpha$ and going up to a new $D_j$ when $\alpha$ crosses $\mu_{i_j}$, identifying the arc $\mu_{i_j,j}^+$ in $\partial D_j$ with the arc $\mu_{i_j,j+1}^-$ in $\partial D_{j+1}$ if $\sigma_j=+1$, or the arc $\mu_{i_j,j}^-$ in $\partial D_j$ with the arc $\mu_{i_j,j+1}^+$ in $\partial D_{j+1}$ if $\sigma_j=-1$. If $\alpha$ is a curve do this modulo $|\alpha \cap \cup \mu_i|$, so the first and last copies of $D$ are also glued along an arc.

Now we can identify the remaining arcs $\mu_{i,j}^+$ with arcs $\mu_{i,j'}^-$, pairing them in any way, to obtain a covering $\widetilde \Sigma$ of $\Sigma$, the union of the arcs $\alpha_i$ in $D_i$ is a simple arc or curve on $\widetilde \Sigma$ that projects to $\alpha$.
\end{proof}

\begin{remark}
In the proof above it is not necessary to go up to a new $D_j$ every time that $\alpha$ crosses an arc $\mu_i$. 
If a simple subarc of $\alpha$ crosses the $\mu_i$'s in a sequence $\mu_{i_s}^{\sigma_s},...,\mu_{i_t}^{\sigma_s}$,  
where $\mu_{i_{s+1}},...,\mu_{i_{t-1}}$ are different from $\mu_{i_s}$ and $\mu_{i_t}$, and 
$\mu_{i_s}^{\sigma_s} \neq \mu_{i_t}^{-\sigma_t}$, then one can remain at the same $D_j$ at the intermediate $\mu_{i_j}$'s and go up only at $\mu_{i_s}$ and $\mu_{i_t}$ and we still get a simple arc or curve on $\widetilde \Sigma$ that projects to $\alpha$. 
\end{remark}

\medskip

Example. Take a $4n$-gon $P$ and $n$ arcs $\alpha_1,\alpha_2,...,\alpha_n$ in $P$ joining opposite even sides. Identify the even sides of $P$ in pairs to obtain a surface with boundary $\Sigma$, and choose the pairings so that the $\alpha_i$'s merge into a single closed curve $\alpha$. 
$P$ is a fundamental region for $\Sigma$, so every covering can be obtained by gluing copies of $P$. Since all the $\alpha_i$'s intersect each other, to undo the self-intersections each $\alpha_i$ must lift to a different copy of $P$, therefore for this example $d(\alpha)\geq n \geq l(\alpha)$.

\medskip

\begin{proposition}{} 
\label{dn}
If $\alpha$ is a curve with minimal self-intersections on a planar surface $\Sigma$, then
$d(\alpha) \leq n(\alpha)$.
\end{proposition}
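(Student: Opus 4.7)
The plan is to adapt the construction of Lemma~\ref{s}, which handles the case of disjoint embedded arcs cutting $\Sigma$ into a disc, to the more general case of a graph $G$ that realizes $n(\alpha)$. The planarity of $\Sigma$ forces such a minimizing $G$ to be a forest of trees whose leaves lie on $\partial\Sigma$ (the same structural fact exploited in the proof of Theorem~\ref{lp}(1)), and this combinatorial rigidity is what makes the generalization go through. Going directly through the arcs of $\partial N(G)$ (as in Theorem~\ref{lp}(1)) would only give $d(\alpha)\leq 2n(\alpha)-2$ via Lemma~\ref{s}, so one really needs to use $G$ itself.

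First I would fix a graph $G$ on $\Sigma$ with $|\alpha\cap G|=n:=n(\alpha)$ cutting $\Sigma$ into a disc $D$. By minimality of the self-intersections of $\alpha$, the intersection $\alpha\cap D$ consists of $n$ embedded arcs $\alpha_1,\dots,\alpha_n$ that pairwise meet at most once. Next I would build a candidate covering $\widetilde\Sigma$ of degree $n$ by stacking $n$ copies $D_1,\dots,D_n$ of $D$ and gluing along $\alpha$: place $\alpha_j$ in $D_j$ and, for each $j$, identify a small arc of $\partial D_j$ at the terminal endpoint of $\alpha_j$ with the corresponding arc of $\partial D_{j+1}$ at the initial endpoint of $\alpha_{j+1}$ (indices mod $n$), both arcs lying on the same edge of $G$. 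The remaining portions of the boundaries $\partial D_1,\dots,\partial D_n$ must then be paired, matching pieces that lie over the same edge of $G$, to close up $\widetilde\Sigma$.

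The main obstacle is to show that the remaining pairings can be arranged so that $\widetilde\Sigma\to\Sigma$ is an honest (unramified) covering. The check is nontrivial only at vertices of $G$ of degree $d\geq 3$: the $dn$ corners lying above such a vertex $v$ must partition into $n$ vertex-links of size $d$, one for each lift of $v$. The planar embedding of $G$ provides a cyclic order of incident edges at each vertex, and tracking this order through the side pairings lets one arrange that every vertex-link closes up after exactly $d$ steps, so that $\widetilde\Sigma\to\Sigma$ is unbranched. The forest structure of $G$ should make this combinatorics tractable, since the identifications propagate along the trees without competing constraints coming from cycles in $G$.

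Once this is done, by construction $\alpha$ lifts to the concatenation $\alpha_1\ast\alpha_2\ast\cdots\ast\alpha_n$, an embedded closed curve in $\widetilde\Sigma$, so $d(\alpha)\leq n=n(\alpha)$, as required.
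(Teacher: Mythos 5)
The difficulty you flag in your third paragraph is the whole content of the proposition, and you do not resolve it. Once the cutting system is a genuine graph $G$ with interior vertices of degree $\geq 3$ (which does occur for planar $\Sigma$: think of a tripod joining three boundary circles of a pair of pants), gluing $n$ copies of $D$ side-by-side amounts to choosing, for each edge $e$ of $G$, a permutation $\pi_e\in S_n$ telling which copy the $e^+$ side of $D_j$ is attached to, with some values of these permutations forced by the rule ``go up by one at each crossing of $\alpha$ with $G$.'' The resulting identification space is a priori only a \emph{branched} cover: at each interior vertex $v$ of degree $d$ the cyclic product of the incident $\pi_e^{\pm 1}$ must be the identity, or else some vertex link upstairs wraps more than once around $v$. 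So you are left with a completion problem for partially prescribed permutations subject to one relation per interior vertex, where each $\pi_e$ appears in the relations of both endpoints of $e$; the sentence ``tracking this order through the side pairings lets one arrange that every vertex-link closes up after exactly $d$ steps'' asserts that this system can always be solved but gives no argument, and the forest structure of $G$ does not by itself produce one (the forced values coming from $\alpha$ are exactly what makes the completion delicate). This is a genuine gap, not a detail: Lemma~\ref{s} is stated for \emph{disjoint properly embedded arcs} precisely because in that situation there are no interior vertices, the free sides can be paired ``in any way,'' and no branching can occur.

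The paper's proof of Proposition~\ref{dn} is designed to sidestep exactly this issue. Instead of gluing along $G$ itself, it first proves a combinatorial claim: each tree of $G$ decomposes into vertex-disjoint leaf-to-leaf paths $\lambda_1,\dots,\lambda_r$ containing every vertex of degree $>2$, together with edge-disjoint paths $\mu_1,\dots,\mu_s$ joining them. The $\lambda_i$ are then properly embedded arcs in $\Sigma$; cutting along them gives $\Sigma'$, in which the $\mu_j$ become disjoint properly embedded arcs cutting $\Sigma'$ into a disc. Lemma~\ref{s} applies verbatim to each arc $\alpha_k$ of $\alpha$ in $\Sigma'$, giving a covering of degree $|\alpha_k\cap\bigcup\mu_j|+1$, and because the $\lambda_i$ are arcs (not circles, not graphs), every copy of $\lambda_i'$ and $\lambda_i''$ in the boundaries of these coverings lifts homeomorphically, so any pairing of them reassembles a covering of $\Sigma$ with no vertex condition to check; one pairing is chosen so the lifted arcs concatenate into an embedded closed curve. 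The degree is $|\alpha\cap\bigcup\mu_j|$ plus the number of arcs $\alpha_k$, i.e. $|\alpha\cap\bigcup\mu_j|+|\alpha\cap\bigcup\lambda_i|=n(\alpha)$. If you want to salvage your direct approach, you would need to supply the missing argument that the permutations $\pi_e$ can always be completed without branching; otherwise the reduction from $G$ to a hierarchy of arcs (or some equivalent device) is indispensable.
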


\begin{proof}
If $\Sigma$ is planar, then the graph on $\Sigma$ that cuts it into a disc and crosses $\alpha$ at $n(\alpha)$ points is a collection of trees $T_1,T_2,...T_k$ that connect the boundary curves of $\Sigma$.
We want to replace these trees by a collection of disjoint, properly embedded arcs $\lambda_1,\lambda_2,...,\lambda_r$ that cut $\Sigma$ into a surface $\Sigma'$ and a collection of disjoint, properly embedded arcs $\mu_1, \mu_2,...,\mu_s$ in $\Sigma'$ that cut $\Sigma'$ into a disc.
We need the following.

\medskip

\noindent \textit{Claim.}
Every tree is a union of vertex-disjoint paths $\lambda_1,\lambda_2,...,\lambda_r$ and edge-disjoint paths  $\mu_1, \mu_2,...,\mu_s$ where each $\lambda_i$ joins vertices of degree 1 and each $\mu_k$ joins some $\lambda_i$ to another $\lambda_j$ or a vertex of degree 1.

To prove the claim, it is enough to show that in each tree $T$ there exist vertex-disjoint paths $\lambda_1,\lambda_2,...,\lambda_r$ that start and end at the vertices of degree 1 and contain all the vertices of degree larger than 2.
We will proceed by induction on the number of vertices of $T$, showing also that one can choose a vertex of degree 1 arbitrarily to belong or not to the paths.
For $v \leq 3$ the result is immediate. Now assume the result holds for all trees with less than $v$ vertices. Take a tree $T$ with $v$ vertices and cut it at a vertex $p$ of degree $g > 1$ to obtain trees $T_1, ..., T_g$ so that $p$ has degree 1 in each $T_i$.
Since each $T_i$ has less vertices than $T$, by the inductive hypothesis we can choose paths in $T_1, T_2$ and $T_3,...,T_g$ so that $p$ belongs to the paths in $T_1,T_2$ but not to the paths in the rest of the trees. 
Then, going back to $T$, we can take the paths in the smaller trees, gluing the two paths that contain $p$, to obtain disjoint paths with the required conditions.

By the previous claim, each tree $T_k$ is the union of paths $\lambda_i$ joining vertices of degree 1 and paths $\mu_j$ that join $\lambda_i$'s with other $\lambda_i$'s or with vertices of degree 1.
Let $\Sigma'$ be the surface obtained by cutting $\Sigma$ along the $\lambda_i$'s and let $\alpha_1,\alpha_2,...\alpha_k$ be the arcs of $\alpha$ in $\Sigma'$.
By lemma \ref{s}, for each $\alpha_k$ there is a covering $\Sigma_k$ of $\Sigma'$ to which $\alpha_k$ lifts as an embedded arc $\widetilde \alpha_k$
and the degree of the covering is the number of times that $\alpha_k$ crosses $\cup \mu_i$ plus 1.
If we glue the surfaces $\Sigma_k$ pairing the arcs in their boundaries above the two copies of each $\lambda_i$ in $\Sigma'$ we always get a covering of $\Sigma$. To get a covering where $\alpha$ lifts as a closed embedded curve it is enough to glue the arc of $\partial \Sigma_k$ where $\widetilde \alpha_k$ ends with the arc of $\partial \Sigma_k$ where $\widetilde \alpha_{k+1}$ starts for each $k$, and then glue the remaining arcs with corresponding arcs in any way. The degree of this covering is the number of times that $\alpha$ crosses the $\mu_i$'s plus the number of $\alpha_k$'s, which is the number of times that $\alpha$ crosses the $\lambda_i$'s.
\end{proof}

Now we want to find a bound for $d(\alpha)$ in terms of $l(\alpha)$ for all closed surfaces and a bound in terms of $n(\alpha)$ for all surfaces.
For this we first need to transform the graph $G$ that crosses $\alpha$ at $n(\alpha)$ points into a hierarchy for $\Sigma$.

\begin{lemma}{}  
\label{h} 
If a graph $G$ cuts a compact surface $\Sigma$ into a disc, then there is a family of disjoint embedded curves $\lambda_1,\lambda_2,...,\lambda_r$ that cut $\Sigma$ into a surface $\Sigma'$ and a family of disjoint embedded arcs $\mu_1, \mu_2,...,\mu_s$ in $\Sigma'$ that connect different components of $\partial \Sigma'$ and cut $\Sigma'$ into a disc, so that if $\alpha$ is a curve on $\Sigma$ then $|\alpha \cap (\bigcup \lambda_i \cup \bigcup \mu_j)| \leq 2 |\alpha \cap G|$.
\end{lemma}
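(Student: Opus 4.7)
\medskip

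\noindent\textbf{Proof plan.} The key observation is that the boundary $\partial N(G)$ of a thin regular neighborhood of $G$ intersects $\alpha$ in exactly $2|\alpha\cap G|$ points: each transverse crossing of $\alpha$ with an edge $e$ of $G$ contributes one crossing to each of the two long arcs of $\partial N(G)$ running parallel to $e$. The plan is therefore to realize $\bigcup\lambda_i\cup\bigcup\mu_j$ as a subgraph of $\partial N(G)$ augmented only by surgery arcs placed inside the vertex discs $D_v\subset N(G)$; such surgery arcs can be routed to be disjoint from $\alpha$, so the intersection bound will be automatic.

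I would start with $\partial N(G)$, a disjoint union of simple closed curves in $\Sigma$, and perform a sequence of band-sum-like re-pairings inside each vertex disc $D_v$. Inside $D_v$, the set $\partial N(G)\cap D_v$ is a disjoint collection of short arcs pairing up $2\deg(v)$ endpoints lying on $\partial D_v$, and we may freely replace these short arcs by any other disjoint embedded-arc matching of the same endpoints. Each such re-pairing is a cut-and-paste on the system of closed curves that uses the same long arcs of $\partial N(G)$ along edges but produces different circuits.

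Using this freedom, choose re-pairings so that the resulting system splits into a first subfamily of disjoint essential simple closed curves $\lambda_1,\ldots,\lambda_r$ forming a cut system that cuts $\Sigma$ into a surface $\Sigma'$ whose components are planar. Existence of such a cut system expressible via re-pairings is guaranteed because $N(G)$ is, up to capping by discs, a genus-$g$ surface with boundary whose $H_1$ is generated by cycles in $G$, and a standard cut system of $g$ disjoint essential curves can be realized using the long arcs along edges of such cycles joined by appropriate vertex-disc matchings. A second subfamily of arcs $\mu_1,\ldots,\mu_s$ is then obtained by cutting each remaining re-paired closed component at one point on some $\lambda_i$ or on $\partial\Sigma$, producing properly embedded arcs in $\Sigma'$ whose endpoints lie on two different components of $\partial\Sigma'$ and whose union cuts $\Sigma'$ into a disc. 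Since each component of $\Sigma'$ is planar, its boundary components can always be organized into a spanning tree by such arcs.

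The intersection bound follows at once: $\bigcup\lambda_i\cup\bigcup\mu_j\subset\partial N(G)\cup(\text{vertex-disc arcs disjoint from }\alpha)$, so $|\alpha\cap(\bigcup\lambda_i\cup\bigcup\mu_j)|\le|\alpha\cap\partial N(G)|=2|\alpha\cap G|$. The main technical difficulty is organizing the vertex-disc re-pairings so that the $\lambda_i$'s simultaneously form a cut system and the $\mu_j$'s automatically connect different boundary components of $\Sigma'$; this is a careful combinatorial argument guided by a spanning tree of $G$ and the cyclic order of edges at each vertex, where the planarity of each piece of $\Sigma'$ is what lets us always route a tree of $\mu_j$'s joining distinct boundary circles.
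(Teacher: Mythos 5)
Your starting point is the right one, and it matches the mechanism behind the factor $2$ in the paper: the $\lambda_i$'s and $\mu_j$'s should be built inside a regular neighbourhood $N(G)$ from the two edge-parallel arcs of $\partial N(G)$ together with arcs in the vertex discs, so that each crossing of $\alpha$ with $G$ accounts for at most two crossings with the new system. But the actual content of the lemma is exactly the part you defer at the end: producing, within this constrained family, (i) disjoint simple closed curves $\lambda_i$, and (ii) disjoint arcs $\mu_j$ from the same family, each joining two \emph{different} components of $\partial\Sigma'$, whose union cuts $\Sigma'$ into a single disc. You assert (i) by appeal to a ``standard cut system realized by vertex-disc matchings'' without showing such a system can be chosen inside a single re-pairing of $\partial N(G)$ (each edge side used once, all components disjoint), and you reduce (ii) to ``cutting each remaining re-paired closed component at one point on some $\lambda_i$ or on $\partial\Sigma$''. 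That recipe fails as stated: a leftover re-paired component is disjoint from the $\lambda_i$'s, so opening it at a point and pushing the two new endpoints to the nearby boundary produces an arc whose endpoints lie on the \emph{same} component of $\partial\Sigma'$ --- which is precisely the condition the lemma must rule out, and the condition that Lemma \ref{p} needs later. To get endpoints on distinct boundary components you need arcs that genuinely cross from one side of the curve system to the other, together with a reason why the two sides are different boundary curves; your sketch contains no such mechanism, and the ``careful combinatorial argument guided by a spanning tree of $G$'' you invoke is exactly the missing proof, not a routine detail.

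The paper supplies that mechanism by choosing a maximal subgraph $G'\subset G$ whose components have planar neighbourhoods and no separating edges. Maximality forces $G-G'$ to be a forest each of whose paths joins distinct boundary curves of $N(G')$; the $\lambda_i$'s are all but one boundary curve of each $N(C_i)$; short arcs crossing $N(C_i)$ (which join distinct boundary curves because no edge of $C_i$ is separating), together with all but one of the arcs of $\partial N(T_j)\setminus\partial\Sigma'$ for each tree component $T_j$, serve as the arcs cutting $\Sigma'$ into a disc, and they join distinct components of $\partial\Sigma'$ by construction. Nothing in your proposal plays the role of this choice of $G'$. A smaller point: requiring the $\lambda_i$'s to cut $\Sigma$ into \emph{planar} pieces is more than the lemma needs (only the distinct-endpoints and disc conditions are required), and insisting on planarity makes your unproved claim (i) harder rather than easier.
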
{}

\begin{proof}
We can assume that the vertices of $G$ have degree at most 3, because we can split the vertices of larger degree without changing the complement of $G$ or its intersections with $\alpha$.

Suppose first that $\Sigma$ is closed.
Let $G'$ be a maximal subgraph of $G$, whose connected components have planar neighbourhoods and have no separating edges. So an arc in $N(G')$ that crosses $G'$ once connects different component of $\partial N(G')$.
Observe that $G$ cannot have any cycle $c$ disjoint from $G'$, since $N(c)$ would be planar and disjoint from $N(G')$, and $G' \cup c$ would satisfy the given conditions. It follows that each connected component of $G-G'$ is a tree, and each path $p$ in $G-G'$ connects different components of $\partial N(G')$, as otherwise $G'\cup p$ would have a planar neighbourhood and no separating edges. 

For each component $C_i$ of $G'$, remove a boundary curve from $N(C_i)$ and connect the remaining boundary curves by short arcs that cross $N(C_i)$. Then $N(C_i)$ deformation-retracts to the union of these curves and arcs, which we call $\lambda_{i_1}, \lambda_{i_2},...$ and $\nu_{i_1},\nu_{i_2},...$ respectively. Let $\Sigma'$ be the surface obtained by cutting $\Sigma$ along the $\lambda_{i_j}$'s for all components of $G'$. 

As each component $T_j$ of $G-G'$ in $\Sigma'$ is a tree, and each path of $T_j$ connects different components of $\partial \Sigma'$, then the arcs of $\partial N(T_i) -\partial \Sigma'$ connect different components of $\partial \Sigma'$. If we remove one of these arcs then $N(T_i)$ deformation-retracts to the union of the remaining arcs (which we call $\mu_{i_1},\mu_{i_2},...$).
So the union of all the $\nu_{i_j}$'s and the $\mu_{j_l}$'s for all the components of $G'$ and $G-G'$ cuts $\Sigma'$ into a disc. 

Now suppose that $\Sigma$ has boundary. Then we can deform $G$ to a graph that touches each boundary component at one point, without changing its complement or the number of intersections with $\alpha$. For, if two edges of $G$ end at the same boundary curve, then we can slide the end of one edge along the curve until it reaches the other edge (see figure \ref{wrap}). The same construction for the closed case works: $G'$ does not touch the boundary because it does not have separating edges, so the boundary of $\Sigma'$ consists of curves of $\partial \Sigma'$ and curves of $\partial N(G')$, and the trees that form $G-G'$ connect different boundary curves of $\Sigma'$ as $G$ only touches each component of $\partial \Sigma$ once.
\end{proof}

\

\begin{figure}[h!]
\centerline{\includegraphics[scale=0.7]{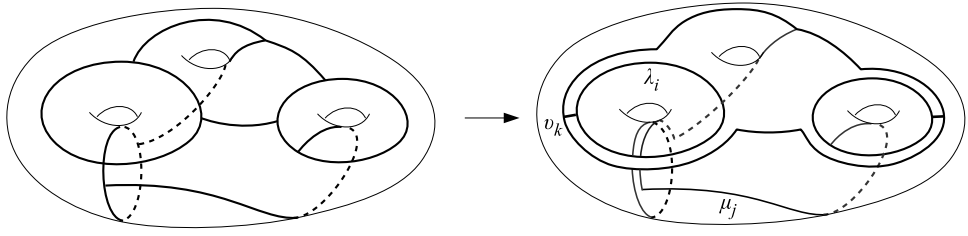}}
\caption{From the graph $G$ to a hierarchy for $\Sigma$.}
\label{grafica}
\end{figure}

\begin{lemma}
\label{p}
Let $\Sigma$ be an orientable surface and $\{\mu_1,\mu_2,...,\mu_k\}$ a collection of properly embedded arcs on $\Sigma$ that connect different components of $\partial \Sigma$ and cut $\Sigma$ into a disc.
If $\alpha$ is an arc on $\Sigma$ that has minimal self-intersections and cannot be homotoped into $\partial \Sigma$, then there is a covering $\widetilde \Sigma$ of $\Sigma$ of degree at most $|a \cap \cup \mu_i| + 2$, where $\alpha$ lifts as an embedded arc $\widetilde \alpha$ with the following property:

(*) $\widetilde \alpha$ connects two different components of $\partial \widetilde \Sigma$ that project with degree 1 to $\partial \Sigma$.
\end{lemma}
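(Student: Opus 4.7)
The plan is to build on Lemma~\ref{s} by adding one extra copy of the fundamental disc $D = \Sigma \setminus \bigcup \mu_i$, using the extra freedom in the pairings of the remaining $\mu$-arcs to force the two boundary cycles through the endpoints of $\widetilde\alpha$ to be disjoint degree-$1$ covers. Write $l = |\alpha \cap \bigcup \mu_i|$ and recall that $\partial D$ consists of arcs of $\partial \Sigma$ (the \emph{boundary arcs}) alternating with pairs of copies $\mu_i^{\pm}$ of each $\mu_i$ (the \emph{gluing arcs}).

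First, I would apply Lemma~\ref{s} to produce a covering of degree $l+1$ built from copies $D_1, \ldots, D_{l+1}$ of $D$ glued successively along the gluing arcs crossed by $\alpha$, in which $\alpha$ lifts to an embedded arc $\widetilde \alpha$ with endpoints $\widetilde p$ on a boundary arc $e_p \subset \partial D_1$ and $\widetilde q$ on a boundary arc $e_q \subset \partial D_{l+1}$. Then I would introduce one additional copy $D_{l+2}$ (bringing the total degree to $l+2$) while retaining the gluings forced by $\alpha$'s crossings, and leave the remaining gluing arcs on $D_1$, $D_{l+1}$, $D_{l+2}$, and the intermediate $D_j$'s still to be paired.

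Next, I would analyse $\partial \widetilde \Sigma$: the preimage of each component $c \subset \partial \Sigma$ is a disjoint union of cycles of boundary arcs (one arc from each $D_j$ it visits), where consecutive arcs in a cycle meet at endpoints of paired gluing arcs; such a cycle projects to $c$ with degree $d$ iff it uses each boundary arc of $c \cap \partial D$ exactly $d$ times. I would then choose the remaining pairings so that the cycle starting at $e_p$ traverses each boundary arc of $c_p \cap \partial D$ exactly once before returning to $D_1$, and, similarly, so that the cycle starting at $e_q$ traverses each arc of $c_q \cap \partial D$ exactly once and remains disjoint from the first cycle. This yields the two distinct degree-$1$ boundary components $B_p \ni \widetilde p$ and $B_q \ni \widetilde q$ required by (*); any gluing arcs still unpaired are then paired off arbitrarily.

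The main obstacle will be checking that the two target cycles can be realised simultaneously by a single consistent choice of pairings. Because pairings at different $\mu_i$'s are independent but the two endpoints of a single gluing arc must be paired consistently, the cycles through $\widetilde p$ and $\widetilde q$ cannot be designed in isolation. The extra copy $D_{l+2}$ supplies exactly the slack needed to reroute a cycle through a fresh disc whenever the forced crossings along $\alpha$ would otherwise make $e_p$ and $e_q$ lie on the same cycle or on a cycle of degree greater than $1$. The hypothesis that $\alpha$ cannot be homotoped into $\partial \Sigma$ is what rules out the degenerate configurations in which no admissible pairing exists.
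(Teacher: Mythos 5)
Your outline reproduces the general strategy (start from Lemma~\ref{s}, allow one extra copy of the disc, and use the free pairings of the remaining $\mu$-arcs to force condition (*)), but it stops exactly where the actual work begins, and the two mechanisms you invoke to close the gap are not the right ones. First, you go up to a new copy at every crossing of $\alpha$ with the $\mu_i$'s and then hope to fix the boundary components by choosing the remaining pairings. This fails when $\alpha$ begins (or ends) by wrapping around its initial boundary component $c$: the gluings already \emph{forced} by the crossings of $\alpha$ make the boundary cycle through $\widetilde p$ run through a second lift of some arc of $c\cap\partial D$ before any free arc is reached, so that cycle covers $c$ with degree at least $2$ no matter how the free arcs are paired and no matter how many extra copies you add. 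Such wrapping is perfectly compatible with $\alpha$ having minimal self-intersections and not being homotopic into $\partial\Sigma$, so your claim that the non-boundary-parallel hypothesis ``rules out the degenerate configurations'' is not correct. The paper has to deal with this by first unwrapping the two ends of $\alpha$ by triangle moves across $\partial\Sigma$, together with a separate argument (in the spirit of Lemma~\ref{n}, but adapted to arcs and to condition (*)) showing that a covering with (*) for the modified arc yields one for the original arc; nothing in your proposal plays either role.

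Second, even after unwrapping, the assertion that ``the extra copy supplies exactly the slack needed'' is precisely what has to be proved, and it requires a genuine case analysis: one must check which arcs at the first and last levels are still free, re-route the first-level identifications through the extra level when the first and last boundary components would otherwise interfere (the case $k=r+1$ in the paper), and in that case use the \emph{orientability} of $\Sigma$ (consistency of the intersection signs of $\mu_0,\mu_1$ with $c$ and $c'$) to exclude the one configuration where the re-routing is impossible. Your proposal never uses orientability, although it is a hypothesis of the lemma and is genuinely needed. Finally, the case $\alpha\cap\bigcup\mu_i=\emptyset$ with both endpoints on the same boundary component is not covered by your scheme as described: there the degree bound is $2$, and producing a double covering satisfying (*) is done in the paper by finding a curve or arc meeting $\alpha$ exactly once (this is where the hypotheses that $\alpha$ is essential and that the $\mu_i$ join distinct boundary components are actually used), not by the generic ``pair the free arcs'' argument. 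As it stands, then, the proposal is a plan that correctly identifies the obstacle but does not overcome it.
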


\begin{proof}
Let $c$ and $c'$ be the boundary curves of $\Sigma$ where $\alpha$ starts and ends ($c$ may be equal to $c'$).

If $\alpha$ does not cross any $\mu_i$'s then $\alpha$ is a simple arc. If $c \neq c'$ then $\alpha$ satisfies condition (*) and we can take $\widetilde\Sigma=\Sigma$.
If $c=c'$ and $\alpha$ does not separate $\Sigma$ then there is a curve that crosses $\alpha$ once, so pasting two copies of $\Sigma$ cut along that curve we get a double covering of $\Sigma$ to which $\alpha$ lifts as an embedding satisfying (*).
If $c=c'$ and $\alpha$ separates $\Sigma$ then each component of $\Sigma|_{\alpha}$ has at least two boundary curves (otherwise it would have positive genus and would have to be crossed by some $\mu_j$, but these arcs cannot have both endpoints in $c$ and so they would have to cross $\alpha$). Then there is an arc that crosses $\alpha$ at one point and connects boundary components of $\Sigma$ different from $c$, so pasting two copies of $\Sigma$ cut along that arc we get a double covering of $\Sigma$ to which $\alpha$ lifts as an embedding satisfying (*).

Now let's assume that $\alpha$ crosses some $\mu_i$'s.
Applying directly the construction in lemma \ref{s} may not work, because $\alpha$ may start by wrapping around $c$, or end by wrapping around $c'$ as in figure \ref{wrap}a, so going up or staying at the same level at every $\mu_i$ may force the boundary curve of $\widetilde \Sigma$ above $c$ to wrap around $c$ or $c'$ more than once.

\begin{figure}[h!]
\centerline{\includegraphics[scale=0.8]{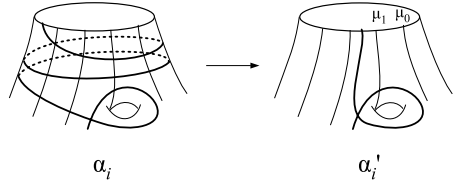}}
\caption{Unwrapping the beginning of $a$.}
\label{wrap}
\end{figure}

To avoid this, start by unwrapping the two ends of $\alpha$, doing a sequence of triangle moves that remove intersections of $\alpha$ with some $\mu_j$'s, as in figure \ref{wrap}b. These triangle moves do not add self-intersections to $\alpha$, and can only remove self-intersections when a triangle formed by the initial and final arcs of $\alpha$ and an arc of $\partial \Sigma$ is removed.
Observe that if there is a covering of $\Sigma$ where the modified arc $\alpha'$  lifts as an embedding $\widetilde \alpha'$ satisfying condition (*), then $\alpha'$ can be isotoped back to $\alpha$ and $\widetilde \alpha$ will be isotopic by triangle moves to $\widetilde \alpha'$. So $\widetilde \alpha$ satisfies condition (*) and $\widetilde \alpha$ is an embedding because a triangle move of $\alpha$ can create a self-intersection of $\widetilde \alpha$ 
only when the initial and final arcs of $\widetilde \alpha$ can form a triangle with an arc of $\partial \widetilde \Sigma$, which can only happen if $\widetilde \alpha$ starts and ends at the same boundary curve, contradicting condition (*).

Now let $\mu_{i_1}^{\sigma_1},\mu_{i_2}^{\sigma_2},...,\mu_{i_l}^{\sigma_r}$ be the signed sequence of crossings along $\alpha$.
We build the covering of $\Sigma$ as in lemma \ref{s}, taking $r+1$ copies of $\Sigma|_{\cup \mu_1}$ and gluing them along the $\mu_i$'s crossed by $\alpha$, identifying the arc $\mu_{i_j,j}^{\sigma_j}$ at level $j$ with the arc $\mu_{i_j,j+1}^{-\sigma_j}$ at level $j+1$. All the other arcs $\mu_{k,j} ^{\pm}$ are free, i.e., they are not identified (yet) with other arcs $\mu_{k,j'} ^{\mp}$. We need to show that these identifications can be made so that the boundary curves where the lifting of $\alpha$ starts and ends are different and project to $c$ and $c'$ with degree 1.

If $\mu_{i_1}$ does not touch $c$, then  the arcs $\mu_{j,1}^{\pm}$ are free for all the $\mu_j$'s that touch $c$, so if we identify all these $\mu_{j,1}^{-}$ with $\mu_{j,1}^{+}$ we get a curve $\tilde c$ that stays at level $1$ and covers $c$ once.

If $\mu_{i_1}$ touches $c$, the direction in which $\alpha$ crosses $\mu_{i_1}$ determines a direction for $c$. Label the $\mu_j$'s that touch $c$ cyclically in this direction as  $\mu_0,\mu_1,...$, where $\mu_{i_1}=\mu_1$, and assume that $c$ crosses them with positive signs. Observe that the initial point $p$ of $\alpha_i$ does not lie in the interval of $c$ between $\mu_0$ and $\mu_1$, otherwise we could unwind $\alpha$ and remove its intersection with $\mu_1$.

As $\mu_{i_1}^{\sigma_1} = \mu_1$, then $\mu_{1,1}^+$ is identified with $ \mu_{1,2}^-$ and all the other $\mu_{j,1}^{\pm}$ are free, so we can identify $\mu_{j,1}^{-}$ with $\mu_{j,1}^{+}$ for all $j \neq 0,1$. We want to identify $\mu_{1,1}^-$ with $\mu_{1,k}^+$ and $\mu_{0,1}^+$ with $\mu_{0,k}^-$ for some $k$, as in figure \ref{unwind}a, so that $\widetilde c$ stays at level 1 except for the interval between $\mu_0$ and $\mu_1$ where it goes up to level $k$.
Let $k$ be the first index for which $\mu_{i_k}^{\sigma_k} \neq \mu_1,\mu_{0}^{-1}$, if there is one. Then $\mu_{1,k}^+$ and $\mu_{0,k}^-$ are free (otherwise $\mu_{i_{k-1}}^{\sigma_{k-1}} = \mu_1^{-1},\mu_{0}$ or $\mu_{i_k}^{\sigma_k} = \mu_1,\mu_{0}^{-1}$). 
If $\mu_{i_j}^{\sigma_j} = \mu_1,\mu_{0}^{-1}$ for all $j$ then $\mu_{1,r+1}^+$ and $\mu_{0,r+1}^-$ are free, and we can take $k=r+1$.

\begin{figure}[h!]
\centerline{\includegraphics[scale=0.8]{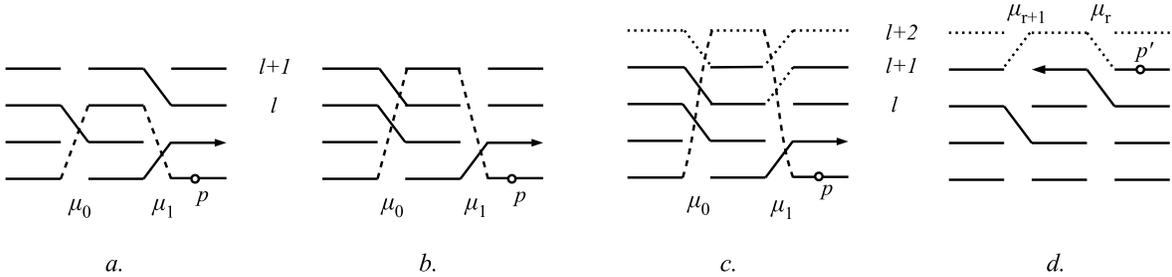}}
\caption{Gluings at the boundary curves above $c$ and $c'$.}
\label{unwind}
\end{figure}

Now we have to arrange that the curve $\tilde c'$ that contains the final point of $\widetilde a$ covers $c'$ once, and that $\tilde c \neq \tilde c'$. This may require adding another level to the covering if the index $k$ above is $r+1$.

If $\mu_{i_r}$ does not touch $c'$, then  the arcs $\mu_{j,r+1}^{\pm}$ are free for all the $\mu_j$'s that touch $c'$, so if we identify all these $\mu_{j,r+1}^{-}$ with $\mu_{j,r+1}^{+}$ we get a curve $\widetilde c'$ covers $c'$ once and stays at level $r+1$, therefore $\widetilde c \neq \widetilde c'$.

If $\mu_{i_r}$ touches $c'$, the direction in which $\alpha$ crosses $\mu_{i_r}$ determines a direction for $c'$. 
Label the $\mu_j$'s that touch $c'$ cyclically in that direction as $\mu_r,\mu_{r+1},...$, so that $\mu_{i_r}= \mu_r$, and assume that $c'$ crosses them with positive signs.
Observe that the final point $p'$ of $\widetilde \alpha$ does not lie in the interval of $c'$ between $\mu_r$ and $\mu_{r+1}$.

If $k \leq r$ then the only arc identified so far at the level $k+1$ is $\mu_{r,r+1}^-$ and all the other arcs $\mu_{j,r+1}^{\pm}$ are free, therefore we can identify $\mu_{j,r+1}^{+}$ with $\mu_{j,r+1}^{-}$ for all $j \neq r, r+1$ and identify $\mu_{r,r+1}^+$ with $\mu_{r,r+2}^-$ and $\mu_{r+1,r+1}^-$ with $\mu_{r+1,r+2}^+$. Now the curve $\widetilde c'$ that contains $p'$ stays at the level $r+1$ except for one arc at level $r+2$ and condition (*) holds.

If $k=r+1$ then $\mu_r = \mu_1$ or $\mu_r  = \mu_0^{-1}$ and the identifications
$\mu_{1,1}^-$ with $\mu_{1,r+1}^+$ and  $\mu_{0,1}^+$ with $\mu_{0,r+1}^-$ need to be changed to
$\mu_{1,1}^-$ with $\mu_{1,r+2}^+$ and  $\mu_{0,1}^+$ with $\mu_{0,r+2}^-$ (see figure \ref{unwind}c), so that the only arc identified at level $r+1$ is $\mu_{r,r+1}^-$, and the only arcs identified at level $r+2$ are 
$\mu_{1,r+2}^+$ and $\mu_{0,r+2}^-$. 
But one of these arcs is $\mu_{r,r+2}^+$ and so the other cannot be $\mu_{r+1,r+2}^+$ 
(otherwise $\{\mu_r,\mu_{r+1}\}=\{\mu_1,\mu_0^{-1}\}$,
but as $\Sigma'$ is orientable, if the signs of the intersections of $\mu_1$ and $\mu_0$ with $c$  agree, then the signs of the intersections of $\mu_1$ and $\mu_0$ with $c'$ must also agree).
Therefore we can identify $\mu_{r,r+1}^+$ with $\mu_{r,r+2}^-$ and $\mu_{r+1,r+1}^-$ with $\mu_{r+1,r+2}^+$
and identify all the arcs $\mu_{j,r+1}^{+}$ with $\mu_{j,r+1}^{-}$ for all $j \neq r, r+1$. The curve $\widetilde c'$ that contains $p'$ stays at the level $r+1$ except for one arc at level $r+2$ and condition (*) holds.
\end{proof}

\begin{remark}
The previous proof starts by gluing $|\alpha \cap \cup \mu_i|+1$ copies of the surface $\Sigma|_{\cup \mu_i}$ along the $\mu_i$'s crossed by $\alpha$, going up to a new copy at each of them. It may be possible to get an embedded arc $\widetilde \alpha$ going up or down or staying at the same copy when crossing the $\mu_i$'s by using less copies of  $\Sigma|_{\cup \mu_i}$, but it is not clear that one can complete the construction of a covering with condition (*) by adding just one more copy of $\Sigma|_{\cup \mu_i}$. However, if only one $\mu_i$ is glued at the first and the last levels then it can be done by adding 2 more copies of $\Sigma|_{\cup \mu_i}$:
Since only the arc $\mu_{1,1}+$ is identified at the level 1 and only the arc $\mu_{r,r+1}^-$ is identified at the level $s+1$, then we can identify $\mu_{1,1}^-$ with $\mu_{1,s+2}^+$ and $\mu_{0,1}^+$ with $\mu_{0,s+2}^-$.
and identify $\mu_{j,1}^{-}$ with $\mu_{j,1}^{+}$ for all $j \neq 0,1$ to make $\widetilde c$ cover $c$ once, 
and we can identify $\mu_{r,r+1}^-$ with $\mu_{s,s+3}^+$ and $\mu_{s+1,s+1}^+$ with $\mu_{s+1,s+3}^-$
and identify $\mu_{j,r+1}^{+}$ with $\mu_{j,r+1}^{-}$ for all $j \neq r, r+1$ to make $\widetilde c'$ cover $c'$ once, 
and by construction $\widetilde c$ and $\widetilde c'$ are different curves, so condition (*) is satisfied.

\end{remark}

\medskip

\begin{theorem}{} 
\label{dn2}
If $\alpha$ is a curve with minimal self-intersections on an orientable surface $\Sigma$, $d(\alpha) < 5n(\alpha)$.
\end{theorem}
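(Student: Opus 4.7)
The plan is to extend the strategy of Proposition \ref{dn} to arbitrary orientable $\Sigma$ by converting the minimum graph realizing $n(\alpha)$ into a hierarchy. Starting with a graph $G$ on $\Sigma$ with $|G\cap\alpha|=n(\alpha)$ that cuts $\Sigma$ into a disc, apply Lemma \ref{h} to obtain disjoint closed curves $\lambda_1,\ldots,\lambda_r$ cutting $\Sigma$ into a surface $\Sigma'$, together with disjoint properly embedded arcs $\mu_1,\ldots,\mu_s$ in $\Sigma'$ connecting distinct components of $\partial\Sigma'$ and cutting $\Sigma'$ into a disc, with $|\alpha\cap(\bigcup\lambda_i\cup\bigcup\mu_j)|\leq 2n(\alpha)$. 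Write $a=|\alpha\cap\bigcup\lambda_i|$ and $b=|\alpha\cap\bigcup\mu_j|$, so $a+b\leq 2n(\alpha)$.

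Cut $\Sigma$ along the $\lambda_i$'s; the curve $\alpha$ decomposes into arcs $\alpha_1,\ldots,\alpha_a$ in $\Sigma'$. Because the $\mu_j$'s connect distinct components of $\partial\Sigma'$, Lemma \ref{p} applies to each $\alpha_j$ and yields a covering $\widetilde\Sigma'_j$ of $\Sigma'$ of degree at most $|\alpha_j\cap\bigcup\mu_i|+2$ in which $\alpha_j$ lifts as an embedded arc $\widetilde\alpha_j$ satisfying condition (*): its two endpoints lie on distinct boundary circles, each covering its $\lambda$-boundary with degree $1$. I then assemble a candidate covering of $\Sigma$ by pairing, for each $j$, the distinguished endpoint boundary of $\widetilde\alpha_j$ with the distinguished starting boundary of $\widetilde\alpha_{j+1}$ (both unambiguously cover the same $\lambda_i$ with degree $1$, so the concatenation is forced), and matching the remaining boundary circles over each $\lambda_i$ in a compatible way.

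This naive assembly can fail in several ways: the pairings may be inconsistent along some $\lambda_i$, the resulting space may be disconnected, or the lift of $\alpha$ may split into several closed curves rather than one. To resolve these failures we ``double'' the construction — take two identical copies of the partial assembly, so that the number of boundary circles available over each $\lambda_i$ is doubled, and redistribute the boundary pairings across the two copies. The extra freedom is enough to guarantee a legitimate connected covering of $\Sigma$ in which $\widetilde\alpha$ is a single closed embedded curve. The degree of the resulting covering is at most $2(b+2a)$, which combined with $a+b\leq 2n(\alpha)$ and a sharper accounting of when the $+2$ of Lemma \ref{p} is actually required (many $\alpha_j$'s already satisfy (*) with just the construction of Lemma \ref{s}, and doubling supplants the extra level needed to force (*) in several configurations) yields $d(\alpha)<5n(\alpha)$.

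The step I expect to be the main obstacle is exactly the doubling and its bookkeeping. One must show by case analysis that the cross-copy redistribution of boundary matchings can always be carried out so that simultaneously (i) every $\lambda_i$-boundary is matched consistently on both sides to form an honest covering, (ii) the assembled space is connected, and (iii) the successive $\widetilde\alpha_j$'s join into a single embedded closed curve — all while not exceeding the target degree. Orientability plays the role it did in Lemma \ref{p}, controlling the signs of intersections at shared boundaries, but now this control must be exerted globally across all the $\lambda_i$'s and $\mu_j$'s simultaneously, rather than one arc at a time, which is the technical heart of the argument.
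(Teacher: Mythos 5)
Your outline follows the paper's strategy (graph $\to$ Lemma \ref{h} hierarchy $\to$ Lemma \ref{p} arc by arc $\to$ glue and double), but it has a genuine gap at the very step where the paper has to work hardest. You assert that ``Lemma \ref{p} applies to each $\alpha_j$'' because the $\mu_i$'s join distinct boundary components; but Lemma \ref{p} also requires that the arc \emph{cannot be homotoped into $\partial\Sigma'$}, and nothing prevents some $\alpha_j$ (indeed a long run of consecutive $\alpha_j$'s) from being parallel to one of the curves $\lambda_u$ — for instance when $\alpha$ spirals around a $\lambda_u$. For such an arc no covering at all contains a lift satisfying condition (*), since it must start and end over the same boundary circle, so your chain of degree-$1$ boundary gluings breaks. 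The paper spends roughly half of its proof on exactly this case: it homotopes a maximal boundary-parallel block $\alpha_s\cdots\alpha_{s'}$ (together with its neighbours) to an arc crossing $\lambda_u$ at most once, checks this keeps self-intersections minimal, and then controls the resulting (possibly much larger) number of crossings with the $\mu_j$'s by invoking the remarks after Lemmas \ref{s} and \ref{p} so that the replacement arc costs no more than the block it replaces. Without this your argument does not go through.

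Two further points are not merely bookkeeping. First, the doubling as you describe it — ``two identical copies of the partial assembly'' — does not repair the obstruction: over a curve $\lambda_j$ one must match boundary circles above $\lambda_j'$ with circles above $\lambda_j''$ of equal covering degree, and if the two multisets of degrees differ, doubling identically doubles both multisets and they still differ. The paper's doubling composes the covering with an automorphism $\phi$ of $\Sigma'$ interchanging $\lambda_j'$ and $\lambda_j''$, so the twisted copy swaps the two multisets and the union is symmetric; that twist is essential. Second, your degree estimate $2(b+2a)$ with $a+b\le 2n(\alpha)$ only gives $8n(\alpha)$ in the worst case ($a$ large, $b$ small), and the ``sharper accounting'' you defer is precisely the missing argument: the paper gets $<5n(\alpha)$ by splitting the arcs into short ones (those crossing $N(G')$, which are simple, need only the trivial degree-$1$ covering, and are never doubled) and long ones (those cut off by $G'$, which carry all the $\mu$-crossings and may be doubled), and by counting $|S|<|\alpha\cap G'|$, $|L|=|\alpha\cap G'|$ separately. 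As written, your proposal establishes neither the existence of the required coverings for all arcs nor the stated constant.
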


\begin{proof}
Let $G$ be a graph on $\Sigma$ that cuts it into a disc and has $n(\alpha)$ intersections with $\alpha$. 
By lemma \ref{h} there is a family of disjoint embedded curves $\lambda_1,\lambda_2,...,\lambda_r$ that cut $\Sigma$ into a surface $\Sigma'$ and a family of disjoint embedded arcs $\mu_1, \mu_2,...,\mu_s$ that cut $\Sigma'$ into a disc, so that $|\alpha \cap (\bigcup \lambda_i \cup \bigcup \mu_j)| \leq 2 |\alpha \cap G|$.

If $\alpha$ does not cross the $\lambda_i$'s then $\alpha$ is a closed curve on $\Sigma'$
and we can find a covering $\widetilde \Sigma'$ of $\Sigma'$ to which $\alpha$ lifts as an embedded curve as in lemma \ref{s}. 
If for each $\lambda_j$, the two curves $\lambda_j'$ and $\lambda_j''$ in $\partial \Sigma'$ have the same number of preimages of each degree in $\widetilde \Sigma'$, then we can identify them to get a covering of $\Sigma$ to which $\alpha$ lifts as an embedding.
There is no reason to expect that this will always happen, but we can take an automorphism $\phi$ of $\Sigma'$ 
that interchanges each $\lambda_j'$ with $\lambda_j''$ and `double' the covering $p:\widetilde \Sigma' \to \Sigma'$ by taking a copy $\widetilde \Sigma_i''$ of $\widetilde \Sigma_i'$ and considering the covering $\phi \circ p:\Sigma_i'' \to \Sigma'$.
Now $\widetilde \Sigma' \cup \widetilde \Sigma_i''$ has the same number of preimages of each degree for each $\lambda_j'$ and $\lambda_j''$, and therefore we can glue them along their boundaries to get a covering of $\Sigma$ to which $\alpha$ lifts as a closed embedded curve.
The degree of this covering is at most $2 |\alpha \cap \bigcup \mu_j| = 4|\alpha_i \cap G| = 4n(\alpha)$.

\

If $\alpha$ crosses some $\lambda_i$'s then these split $\alpha$ into a collection of arcs $\alpha_1, \alpha_2,...$ in $\Sigma'$. 
By lemma \ref{p}, for each $\alpha_i$ that is not homotopic into the boundary of $\Sigma'$ there is a covering $\widetilde \Sigma_i'$ of $\Sigma'$ of degree at most $|\alpha_i \cap \cup \mu_j| + 2$ to which $\alpha_i$ lifts as an embedded arc $\widetilde \alpha_i$ satisfying condition (*).

If some $\alpha_i$ is homotopic to an arc in the boundary of $\Sigma'$, there are no coverings of $\Sigma'$ to which $\alpha_i$ lifts starting and ending at different boundary curves, so we need to modify the previous construction.

Consider a maximal sequence of arcs $\alpha_{s} \alpha_{s+1}...\alpha_{s'}$ of $\alpha$ that can be homotoped to some $\lambda_u$, then a subarc $a$ of $\alpha$ that starts just before $\alpha_{s}$ and ends just after $\alpha_{s}$ is homotopic, fixing its endpoints,  to an arc $a'$ in a neighbourhood of $\lambda_u$ that crosses $\lambda_u$ at most once.
As $\alpha$ has minimal self-intersections, the homotopy can be done by a sequence of bigon and triangle moves with $\lambda_u$ and the adjacent $\mu_j$'s, so the number of self-intersections of $\alpha$ does not change. The homotopy reduces the number of intersections of $\alpha$ with $\lambda_u$ by at least 2, but it can increase the number of intersections with the $\mu_j$'s even more, as can be seen in figure \ref{move}:

\begin{figure}[h!]
\centerline{\includegraphics[scale=0.6]{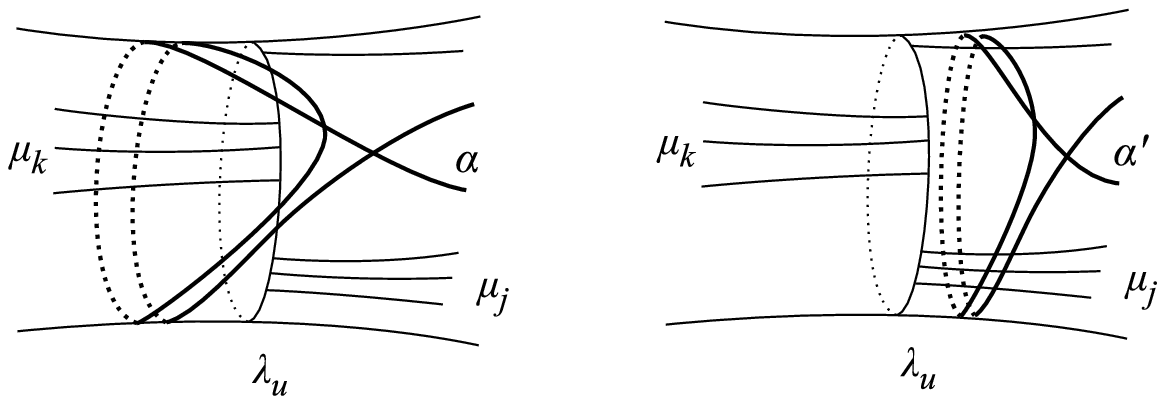}}
\caption{}
\label{move}
\end{figure}

Let $\alpha'$ be the image of $\alpha$ after the homotopy, then the homotopy reduces the sequence of arcs $\alpha_{s-1} \alpha_{s} \alpha_{s+1}...\alpha_{s'} \alpha_{s'+1}$ to one formed by two arcs $\alpha'_{s-1}\alpha'_{s'+1}$ or a single arc $\alpha'_{s}$ that are not homotopic into $\lambda_u$.
In the first case, although $\alpha'_{s-1}$ and $\alpha'_{s'+1}$ may intersect the $\mu_j$'s more times than 
$\alpha'_{s-1}$ and $\alpha'_{s'+1}$, the new intersections occur if the homotopy wraps the ends of these arcs around $\lambda_u$, but in the proof of lemma \ref{p} these intersections do not count towards the degree of the covering.

In the second case, the subarc $a$ of $\alpha$ is made of a loop representing some power $\lambda_u^e$ and two simple arcs, and the same is true for the homotopic arc $a'$ of $\alpha'$ that lies in a neighbourhood of $\lambda_u$. So $a$ crosses the union of the $\mu_j$'s and $\lambda_t$ at least $n$ times.
$a'$ may cross the union of $\mu_j$'s many more times, but we can assume that $a'$ crosses them in a sequence that follows the sequence of crossings of $\lambda_u$ with the $\mu_j$'s on that side of $\lambda_u$, crossing each of them at most $t+1$ times. 
By the remark after lemma \ref{s}, to lift $\alpha'_s$ as an embedding it is not necessary to go up at all the crossings of $a'$ with $\mu_j$'s, one can just go up at the first $\mu_j$ crossed by $a'$ and every time that $a''$ crosses it, and then go up at the last $\mu_j$ crossed by $a'$, so $\alpha'_s$ goes up at most $\alpha_{s} \alpha_{s+1}...\alpha_{s'} \cap \cup \mu_j +1$ times.
By the remark after lemma \ref{p} we can find a covering of $\Sigma'$ where $\alpha'_s$ lifts as an embedding satisfying condition (*) of degree at most 
$|\alpha_{s} \alpha_{s+1}...\alpha_{s'} \cap \cup \mu_j| +1+3 < |\alpha_{s-1} \cap \cup \mu_j|+2+|\alpha_{s} \cap \cup \mu_j|+2+...+ |\alpha_{s'+1} \cap \cup \mu_j|+2$.

Now, glue the coverings $\Sigma_i'$ corresponding to the arcs $\alpha_i$ along the boundary curves where the liftings of the arcs start and end, so $\alpha$ lifts to a closed embedded curve. The rest of the boundary curves of the $\Sigma_i'$ still need to be paired with other curves to obtain a covering of $\Sigma$. As mentioned before, this can be done if for each $\lambda_j$, the two curves $\lambda_j'$ and $\lambda_j''$ in $\partial \Sigma'$ have the same number of preimages of each degree in $ \sqcup \Sigma_i'$. If this does not happen, we can double the $\Sigma_i$'s with different number of preimages (as we did in the case when $\alpha$ didn't cross the $\lambda_i$'s) to guarantee that the pairing can be done.

To estimate the degree of the covering, observe that the arcs $\alpha_1,\alpha_2,...$ in $\Sigma'$ whose union is $\alpha$ can be divided in two classes: `long' $\alpha_i$'s corresponding to the arcs of $\alpha$ cut by the subgraph $G'$, and `short' $\alpha_i$'s  where $\alpha$ crosses $N(G')$. The long $\alpha_i$'s contain all the intersections of $\alpha$ with the $\mu_j$'s, and the short $\alpha_i$'s are simple and connect different boundary curves of $\Sigma'$.

The coverings $\Sigma_i$ for the short $\alpha_i$'s are trivial and there is no need to double them, the coverings
$\Sigma_i$ for the long $\alpha_i$'s have degree $|\alpha_i \cap \bigcup \mu_j|+2$ and may have to be doubled. If
$S$ is the set of indices corresponding to short $\alpha_i$'s and $L$ is the set of indices corresponding to long $\alpha_i$'s, then $|S| < |\alpha \cap G'|$ and $|L| = |\alpha \cap G'|$ and the total degree of the covering is at most

\medskip

$\sum\limits_{i \in S}1 + 2 \sum\limits_{i \in L} |\alpha_i \cap \bigcup \mu_j|+2 < |\alpha \cap G'|+2(2|\alpha \cap (G-G')|+2 |\alpha \cap G'|) \leq 5 |\alpha \cap G| =5n(\alpha)$.
\end{proof}

\medskip

The previous bound does not appear to be optimal, we conjecture that $d(\alpha) < 2n(\alpha)$.

\medskip

Example. If a curve $\alpha$ is formed by two subloops $x^n$ and $y$, where $x$ and $y$ are simple, disjoint and non-separating, then $n(\alpha)=n+1$ and $d(\alpha) \geq n+1$ so $d(\alpha) \geq n(\alpha)$. If $x$ is separating then $n(\alpha) = n+1$, and we believe, but cannot prove, that $d(\alpha)=2n-1$.

\medskip

Recall that $r(\alpha)$ is the minimum degree of a covering of $\Sigma$ to which $\alpha$ does not lift as a closed curve, so $r(\alpha)$ is the index of the largest subgroup of $\pi_1(\Sigma)$ that does not contain an element in the conjugacy class of $\alpha$.
One would expect (as it is often the case) that $r(\alpha) < d(\alpha)$, but the following example shows that this is not always true.

\medskip

Example. \textit{A curve $\alpha$ with $r(\alpha)>d(\alpha)$.} The curve $\alpha$ in figure \ref{dyr} represents the conjugacy class of $abab^{-1}$ in $\pi_1(\Sigma)$. As $\alpha$ is 0 in $H_1(\Sigma, \mathbf{Z_2})$ then $\alpha$ lifts to all the double coverings of $\Sigma$, therefore $r(\alpha)>2$. As $\alpha$ lifts as an embedding to the double covering obtained by cutting $\Sigma$ along the dotted curve, $d(\alpha)=2$.

\begin{figure}[h!]
\centerline{\includegraphics[scale=0.6]{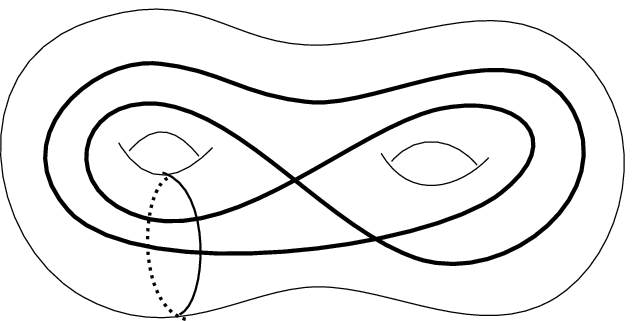}}
\caption{A curve $\alpha$ with $r(\alpha)>d(\alpha)$}
\label{dyr}
\end{figure}

\begin{corollary}
If $\alpha$ is an essential curve on a surface $\Sigma$ then 
\begin{enumerate}
\item $r(\alpha)=2$ or $3$ or $r(\alpha) < n(\alpha)$ \hskip 10pt if $[\alpha] \neq 0$ in $H_1(\Sigma)$.  
\item  $r(\alpha) \leq n(\alpha)+1$ \hskip 10pt if $\Sigma$ is planar.
\item  $r(\alpha) \leq 5n(\alpha)$ \hskip 10pt if $\Sigma$ is orientable.
\end{enumerate}
\end{corollary}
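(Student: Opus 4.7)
The plan is to address the three parts in turn, using abelian coverings for part 1 and modifications of the constructions of proposition \ref{dn} and theorem \ref{dn2} for parts 2 and 3.

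For part 1, assume $[\alpha]\neq 0$ in $H_1(\Sigma;\mathbb{Z})$ and write $[\alpha]=dv$ with $v$ primitive. For any prime $p\nmid d$, the composition $\pi_1(\Sigma)\to H_1(\Sigma)\to\mathbb{Z}/p$, using a homomorphism $H_1\to\mathbb{Z}$ sending $v$ to $1$, maps $\alpha$ to a nonzero element, so the corresponding $p$-fold cyclic cover does not lift $\alpha$ and $r(\alpha)\leq p$. If $2\nmid d$ or $3\nmid d$ this yields $r(\alpha)\in\{2,3\}$, so assume $d\geq 6$. I would next establish $n(\alpha)\geq d$: any graph $G$ cutting $\Sigma$ into a disc, after collapsing a spanning tree, becomes a bouquet of simple loops whose classes generate $H_1(\Sigma)$, and by non-degeneracy of the intersection form (or, for surfaces with boundary, by pairing $[\alpha]$ with a cohomology class realized by a chord of $G$), some such loop $L$ satisfies $v\cdot[L]\neq 0$, giving $|\alpha\cap G|\geq|\alpha\cap L|\geq|[\alpha]\cdot[L]|=d\,|v\cdot[L]|\geq d$. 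Finally, when $d\geq 6$ the smallest prime $p$ not dividing $d$ satisfies $p<d$: the product of all primes less than $p$ must divide $d$ and already exceeds $p$ (explicitly $2\cdot 3=6>5$, $2\cdot 3\cdot 5=30>7$, $2\cdot 3\cdot 5\cdot 7=210>11$, and so on). Hence $r(\alpha)\leq p<d\leq n(\alpha)$.

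For part 2, start with the covering of degree $n(\alpha)$ produced in proposition \ref{dn}, in which $\alpha$ lifts as the closed embedded curve $\widetilde\alpha=\widetilde\alpha_1\cdots\widetilde\alpha_k$ formed by joining arc-lifts end-to-end across copies of the $\lambda_i$-boundaries. I would insert one additional copy $\Sigma_0$ of the cut surface $\Sigma'$ and reroute the identification that closed up $\widetilde\alpha$: glue the endpoint of $\widetilde\alpha_k$ to an arc of $\Sigma_0$ above the appropriate side of the relevant $\lambda_i$, and glue the starting arc of $\widetilde\alpha_1$ to a different arc of $\Sigma_0$ above the opposite side; then pair the remaining boundary arcs of $\Sigma_0$ with themselves above each $\lambda_j$ and $\mu_j$. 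The result is a valid cover of $\Sigma$ of degree $n(\alpha)+1$ in which the lift of $\alpha$ ends inside $\Sigma_0$ and cannot close up, giving $r(\alpha)\leq n(\alpha)+1$. The same insertion applied to the covering of degree $d(\alpha)<5n(\alpha)$ built in theorem \ref{dn2} handles part 3, with degree $d(\alpha)+1\leq 5n(\alpha)$.

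The subtle point will be verifying that the modified cover has no closed lift of $\alpha$ at all (not merely that the distinguished lift used in the construction has been broken); this reduces to checking that the monodromy $\rho(\alpha)$ in the original cover fixes only the distinguished sheet and that the rerouting through $\Sigma_0$ moves it. For part 3 the insertion must also be compatible with the boundary-matching doubling used in theorem \ref{dn2}, but the free pairings available inside $\Sigma_0$ supply enough flexibility to accommodate this, echoing the pairing discussion at the end of that proof.
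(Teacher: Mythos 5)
Your handling of (2) and (3) is essentially the paper's own argument: the paper likewise takes the coverings built in Proposition \ref{dn} and Theorem \ref{dn2} and, instead of closing the chain of glued pieces cyclically, inserts one extra copy of the cut surface so that the distinguished lift of $\alpha$ becomes an embedded arc, giving degrees $n(\alpha)+1$ and at most $5n(\alpha)$. The subtlety you flag --- that one must exclude \emph{every} closed lift, not merely break the distinguished one --- is genuine, but the paper's proof is silent on it as well, so here you match (and are more candid than) the source; neither text carries out the monodromy check you describe. For (1) you take a genuinely different route: the paper produces generators $h_1,h_2,\dots$ of $H_1(\Sigma)$ dual to the sides of the polygonal fundamental domain, each meeting $\alpha$ at most $n(\alpha)$ times, and then takes the smallest integer $N$ not dividing $[\alpha]\cdot[h_i]$, which is smaller than $[\alpha]\cdot[h_i]\le n(\alpha)$ unless that number is $1$ or $2$; you instead use the divisibility $d$ of $[\alpha]$, a prime $p\nmid d$ giving a cyclic cover of degree $p$, and a separate inequality $n(\alpha)\ge d$ obtained from a cycle of $G$ pairing nontrivially with the primitive class $v$. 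Your version is correct for closed orientable surfaces and makes the number-theoretic step cleaner, at the price of the extra lemma $n(\alpha)\ge d$, which the paper avoids because its curves $h_i$ come with the intersection bound built in. Two caveats: your parenthetical for surfaces with boundary (``a cohomology class realized by a chord of $G$'') needs to be made precise, e.g.\ by decomposing a relative $1$-cycle carried by $G\cup\partial\Sigma$ into simple loops and boundary-to-boundary paths, one of which pairs nontrivially with $v$ and meets $\alpha$ at most $n(\alpha)$ times; and, like the paper, your argument for (1) assumes the integral intersection pairing detects $[\alpha]$, which requires extra care on non-orientable surfaces.
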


\begin{proof}
We can assume that $\alpha$ has minimal self-intersections because if a curve lifts to a covering of $\Sigma$ then all the free homotopies of the curve lift to that covering.

To prove (1) cut $\Sigma$ into a polygonal disc $D$ that is crossed by $\alpha$ along $n(\alpha)$ arcs. Then $H_1(\Sigma)$ is generated by curves that cross $D$ along one path connecting corresponding sides of $D$.
As $\alpha$ has minimal self-intersections, the arcs of $\alpha \cap D$ are simple and cross each other in at most 1 point, therefore each pair of corresponding sides of $D$ is connected by a path in $D$ that crosses each arc of $\alpha \cap D$ at most once. These paths give generators $h_1,h_2,...$ for $H_1(\Sigma)$ such that $|h_i \cap \alpha| \leq n(\alpha)$ for each $h_i$.
If $[\alpha] \neq 0$ in $H_1(\Sigma)$ then the algebraic intersection number $[\alpha] \cap [h_i]$ is not 0 for some $h_i$, and $[\alpha] \cap [h_i] \neq 0 (mod N)$ for the smallest $N$ that does not divide it, which is smaller than $[\alpha] \cap [h_i]$ unless this is $1$ (so $N=2$) or 2 (so $N=3$).

To prove (2) and (3) observe that in the constructions in proposition \ref{dn} and in theorem \ref{dn2}, instead of gluing the coverings of $\Sigma|_{\cup \lambda_i}$ where the arcs $\alpha_1, \alpha_2,...$ lift as embeddings satisfying condition (*) cyclically to close $\widetilde \alpha$, we can insert an extra copy of $\Sigma|_{\cup \lambda_i}$ at the end so that $\widetilde \alpha$ does nor close up and becomes an embedded arc.
\end{proof}

\end{document}